\documentclass[onefignum,onetabnum]{siamart171218}

\usepackage{amsmath,amssymb,mathtools}
\usepackage{bm}
\usepackage{graphicx}
\usepackage{booktabs}
\usepackage{float}
\usepackage{algorithm}
\usepackage{algpseudocode}
\usepackage[numbers,sort&compress]{natbib}
\usepackage{needspace}
\usepackage{hyperref}

\newsiamthm{example}{Example}
\newsiamremark{remark}{Remark}

\newcommand{\tens}[1]{\mathcal{#1}}             
\newcommand{\starM}{\star_{\mathbf{M}}}         
\newcommand{\tp}{*}                             
\newcommand{\Mmat}{\mathbf{M}}                  
\newcommand{\hatT}[1]{\widehat{\mathcal{#1}}}   
\newcommand{\face}[2]{{#1}^{(#2)}}              
\newcommand{\lat}[2]{\vec{\mathcal{#1}}_{#2}}   
\newcommand{\Id}{\tens{I}}                      
\newcommand{\Rt}{\tens{R}}                      
\newcommand{\trans}{^{\mathsf{T}}}              
\newcommand{\fnorm}[1]{\left\lVert #1 \right\rVert_F}
\newcommand{\R}{\mathbb{R}}

\DeclareMathOperator{\rank}{rank}
\DeclareMathOperator{\svd}{svd}

\begin{document}

\title{A Symmetry-Preserving Tensor $\star_{\rm M}$-SVD}
\author{Victor Arsenescu\thanks{Department of Mathematics, Tufts University, Medford, MA 02155}
\and
Misha E.~Kilmer\thanks{Department of Mathematics, Tufts University,
    Medford, MA 02155 (\texttt{misha.kilmer@tufts.edu}).}
}
\date{\today}
\maketitle

\begin{abstract}
Multiway data such as image collections and video is ubiquitous,
but the usual approach of flattening them into matrices discards
the cross-mode structure that often carries the signal.
The t-product and its generalization, the $\starM$-product, give a
matrix-mimetic tensor algebra with a tensor SVD whose truncation is optimal in
the Frobenius norm, just as in the matrix case. Much real data also has
internal reflective symmetry: frontal faces, manufactured parts, and
leaves are all bilaterally symmetric. We define a symmetry-preserving $\starM$-SVD
that extends the matrix symmetry-preserving SVD of Shah and Sorensen
to the $\starM$-algebra. When a tensor's transform-domain frontal slices are
reflectively symmetric, the left basis of its $\starM$-SVD is symmetric too, so
only its top half must be stored. Bilaterally symmetric images, turned on their
side and stored as lateral slices, give such a tensor. This construction keeps
the symmetric part of each image and stores only half the basis.
We then recognize new images by projecting them onto this basis and matching to the
nearest training image in coefficient space. Across ten datasets of 
faces, leaves, butterfly wings, and other objects, the symmetric basis matches the
recognition rate of ordinary tensor SVD at $2$--$11$x less basis storage,
with the exception of MUCT. On faces under varying illumination it
exceeds the best rate the ordinary tensor SVD attains at any storage level.
\end{abstract} 

\begin{keywords}
multilinear algebra, tensor SVD, t-product, symmetry-preserving SVD,
facial recognition, data compression
\end{keywords} 

\section{Introduction}\label{sec:intro}

Data indexed by three or more modes, such as collections of images,
hyperspectral cubes, and time-resolved measurements, is common in imaging,
machine learning, and scientific computing.  A classical approach to compression and analysis on this data is to first matricize the data, then compute a rank-revealing factorization, such as the SVD. However, unfolding breaks the correlations
between modes in the multiway data set which is often what needs to be preserved.  
It has been shown that by keeping data in its native multiway format (i.e. as a tensor object) and decomposing the tensor directly yields better compression, better interpretability, and ease in downstream tasks (such as recognition).

Two of the most common tensor decomposition methods are the CANDECOMP/ PARAFAC
(CP) \cite{CarrollChang1970,Harshman1970} and Tucker models \cite{Tucker1966}, where the latter is most frequently used in the form of the higher-order SVD (HOSVD) \cite{DeLathauwer2000}.
The practical difficulty with a CP decomposition is that finding the exact tensor rank is a computationally intractable problem.
In fact, a best low-CP-rank approximation need not even exist \cite{deSilvaLim2008}.   
While the HOSVD can be computed in polynomial time, one needs to truncate the factor matrices to achieve a compressed representation.  However, 
truncating an HOSVD is quasi-optimal but not optimal in the Frobenius norm.
A newer family of tensor decompositions which can 
provide optimal low-rank approximations in the Frobenius norm does exist. In \cite{KilmerMartin2011}, Kilmer and Martin introduced the
t-product, a convolution-based tensor--tensor product under which third-order
tensors form a matrix-mimetic algebra with an SVD, the t-SVD. 
A few years later, Kernfeld, et al., \cite{KernfeldKilmerAeron2015}
 generalized the t-product to the $\starM$-product, in which the
defining convolution is replaced by any invertible linear transform $\Mmat$
applied along the third mode. Kilmer, et al., 
were able to prove that when $\Mmat$ is orthogonal/unitary (up to scale), truncating
the $\starM$-SVD gives the best low-rank approximation in the Frobenius
norm \cite{KHAN2021}.  Additionally, they were able to prove that the truncated tensor-SVD provides superior approximation properties to using a matricized-SVD to perform the compression.

A structural feature that the $\starM$ product family of tensor decompositions has not yet exploited is symmetry in the
data.  Many multiway datasets are reflectively symmetric: a frontal face is
bilaterally symmetric across its vertical midline, and a manufactured part
is symmetric by design.  In the matrix setting, Shah and
Sorensen showed that a data matrix whose rows are reflectively symmetric has an
SVD whose left singular vectors are themselves symmetric, so that only half of
each need be stored, and that for imperfectly symmetric data this construction
yields the Frobenius-norm-optimal symmetric approximation \cite{ShahSorensen2006}.
Despite the well-developed $\starM$-SVD, there is no dedicated treatment of the
symmetry-preserving case.

\paragraph{Contributions} We make the following contributions.
\begin{itemize}
  \item We show how a collection of bilaterally symmetric
    images, arranged as lateral slices turned on their side, yields a tensor
    whose transform-domain frontal slices are reflectively symmetric over their 
    middle rows. We refer to such a tensor as $\starM$-symmetric and
    show it is equivalent to face-wise matrix symmetry because the reflection
    commutes with the mode-3 transform.
  \item We establish the symmetry-preserving $\starM$-SVD
    $\tens{A}=\tens{U}\starM\tens{S}\starM\tens{V}\trans$ in which the left basis
    $\tens{U}=\tfrac{1}{\sqrt2}[\tens{U}_0;\,\Rt\starM\tens{U}_0]$ is exactly
    reflection-symmetric, so only its top half $\tens{U}_0$ need be stored and
    is computed from a half-size folded tensor. Since this factorization is
    itself a $\starM$-SVD, its truncation is likewise the best low-rank
    approximation in the Frobenius norm \cite{KHAN2021}, at half the basis
    storage.
  \item  We give an algorithm that halves the SVD size and basis storage
  and show on ten recognition datasets that it matches the ordinary tensor
  SVD's rate at $2$--$11\times$ less storage, except on MUCT, and on faces
  under varying illumination exceeds its best rate at any storage.
\end{itemize}

\paragraph{Organization}
Section~\ref{sec:background} recaps the standard background of the $\starM$-product.
Section~\ref{sec:symmetric} recalls matrix symmetry-preserving SVD and defines
$\starM$-symmetric tensors.  Section~\ref{sec:sym-svd} establishes the
symmetry-preserving $\starM$-SVD and its structure.
Section~\ref{sec:algorithms} presents the algorithm and its cost.
Section~\ref{sec:experiments} reports the recognition experiments, and
Section~\ref{sec:conclusion} concludes with future directions.

\section{Background}\label{sec:background}

We define the notation and standard results on the $\starM$-product and
$\starM$-SVD used throughout, following the presentation from \cite{KHAN2021},
to which we refer for proofs. The data in this paper are strictly real, so we define
everything for real tensors, but the complex versions hold with
conjugate transposes instead of transposes.

\paragraph{Notation}
A third-order tensor is an array $\tens{A}\in\R^{m\times p\times n}$, scalars
are lowercase ($a$), matrices capitals ($A$, $\Mmat$), tensors calligraphic
($\tens{A}$), using MATLAB notation for sub-arrays.  The three dimensions of a third-order tensor are called the {\it modes}.  The \emph{tube fibers} (tubes)
$\tens{A}_{i,j,:}$ are $1\times1\times n$,
and these are essentially vectors in the third mode. The
the $i$th \emph{frontal slice} is the
$m\times p$ matrix $\tens{A}_{:,:,i}$, written $\face{A}{i}$, the $j$th \emph{lateral slice}
is the $m\times1\times n$ tensor $\tens{A}_{:,j,:}$, written $\lat{A}{j}$, a ``column''
of $\tens{A}$ whose ``entries'' are tube fibers.  In this paper, the lateral slices carry the data
samples: each image is stored as one $\lat{A}{j}$ (turned on its side, see
Section~\ref{sec:tensor-construction}), so $\tens{A}$ holds $p$ images.
The Frobenius norm is $\fnorm{\tens{A}}^2=\sum_{i,j,k}\tens{A}_{i,j,k}^2$.
For an $n\times n$ matrix $\Mmat$, the mode-3 product $\tens{A}\times_3\Mmat$
applies $\Mmat$ to every tube of $\tens{A}$. It is mathematically equivalent to computing the matrix-matrix product $\Mmat\tens{A}_{(3)}$, 
where $\tens{A}_{(3)}$ denotes the mode-3 unfolding $\tens{A}_{(3)}\in\R^{n\times mp}$, and then reshaping the resulting product back
into a tensor (see 
\cite{KoldaBader2009} for details).

\paragraph{The $\starM$-product}
Let $\Mmat$ be an invertible $n\times n$ matrix.  We use hat notation for the
\emph{transform domain}: $\hatT{A}:=\tens{A}\times_3\Mmat$, so
$\tens{A}=\hatT{A}\times_3\Mmat^{-1}$, and $\face{\hat A}{i}=\hatT{A}_{:,:,i}$.
\begin{definition}[$\starM$-product \cite{KernfeldKilmerAeron2015}]\label{def:starM}
For $\tens{A}\in\R^{m\times p\times n}$ and $\tens{B}\in\R^{p\times\ell\times n}$,
$\tens{C}=\tens{A}\starM\tens{B}\in\R^{m\times\ell\times n}$ is defined by the
$n$ independent matrix products $\face{\hat C}{i}=\face{\hat A}{i}\face{\hat B}{i}$,
$i=1,\dots,n$, followed by $\tens{C}=\hatT{C}\times_3\Mmat^{-1}$.
\end{definition}
When $\Mmat$ is the (unnormalized) DFT matrix this is the t-product
$\tens{A}\tp\tens{B}$ from \cite{KilmerMartin2011}. Other choices in
\cite{KHAN2021} include the DCT, orthogonal wavelets, and data-driven
orthogonal $\Mmat$.  In  \cite{NewmanKeegan2025}, the authors discuss how to learn a best $\Mmat$. Throughout
this paper $\Mmat$ is the orthogonal DCT matrix, so $\Mmat^{-1}=\Mmat\trans$
and everything stays real.  The remaining algebra is defined facewise in the
transform domain \cite[Defs.~2.1--2.3]{KHAN2021}: the \emph{transpose}
$\tens{A}\trans\in\R^{p\times m\times n}$ has faces
$(\face{\hat A}{i})\trans$ (so
$(\tens{A}\starM\tens{B})\trans=\tens{B}\trans\starM\tens{A}\trans$). The
\emph{identity} $\Id\in\R^{m\times m\times n}$ has every $\face{\hat I}{i}$
equal to the $m\times m$ identity, while $\tens{Q}\in\R^{m\times m\times n}$ is
\emph{$\starM$-orthogonal} if $\tens{Q}\trans\starM\tens{Q}=\Id=\tens{Q}\starM\tens{Q}\trans$,
i.e.\ every $\face{\hat Q}{i}$ is orthogonal, and
$\tens{U}\in\R^{m\times r\times n}$ has \emph{$\starM$-orthonormal columns}
(lateral slices) if $\tens{U}\trans\starM\tens{U}$ is the $r\times r\times n$
identity.

\subsection{Truncating the \texorpdfstring{$\starM$}{starM}-SVD}
Following \cite{KHAN2021} we restrict to $\Mmat=cW$ with $W$ orthogonal and
$c\neq0$ (for the DCT, $c=1$) then $\fnorm{\tens{Q}\starM\tens{B}}=\fnorm{\tens{B}}$
for $\starM$-orthogonal $\tens{Q}$ \cite[Thm.~3.1]{KHAN2021}, and Frobenius
norms may be measured face-wise in the transform domain,
$\fnorm{\tens{B}}^2=\tfrac1{c^2}\sum_i\fnorm{\face{\hat B}{i}}^2$.

\begin{definition}[$\starM$-SVD \cite{KilmerMartin2011,KernfeldKilmerAeron2015}, {\cite[Def.~3.2]{KHAN2021}}]\label{def:tsvdm}
The $\starM$-SVD (t-SVDM) of $\tens{A}\in\R^{m\times p\times n}$ is
\begin{equation}\label{eq:tsvdm}
  \tens{A}=\tens{U}\starM\tens{S}\starM\tens{V}\trans
   =\sum_{i=1}^{r}\tens{U}_{:,i,:}\starM\tens{S}_{i,i,:}\starM\tens{V}_{:,i,:}\trans ,
\end{equation}
with $\tens{U}$, $\tens{V}$ $\starM$-orthogonal, $\tens{S}$ f-diagonal (every
frontal slice diagonal), and \\ $r\le\min(m,p)$ the number of nonzero
\emph{singular tubes} $\mathbf{s}_i=\tens{S}_{i,i,:}$.
\end{definition}

The factorization is computed face-wise: form $\hatT{A}$, take the matrix SVD
$\face{\hat A}{i}=\face{\hat U}{i}\face{\hat S}{i}(\face{\hat V}{i})\trans$
of each face, and transform the factors back with $\times_3\Mmat^{-1}$
\cite[Alg.~2]{KHAN2021}, with the economy version keeping $\min(m,p)$ columns.
Then $\fnorm{\tens{A}}^2=\sum_i\fnorm{\mathbf{s}_i}^2$ with
$\fnorm{\mathbf{s}_1}\ge\fnorm{\mathbf{s}_2}\ge\cdots$
\cite[Cor.~3.3]{KHAN2021}. The \emph{t-rank} of $\tens{A}$ is $r$ (not to be
confused with the CP rank), the \emph{multirank} is the vector $\rho$ with
$\rho_i=\rank\face{\hat A}{i}$, and the \emph{implicit rank} is $\sum_i\rho_i$,
the total number of singular values kept across the faces
\cite[Defs.~3.4--3.6]{KHAN2021}.

\begin{theorem}[{Eckart-Young for the $\starM$-SVD \cite[Thm.~3.7]{KHAN2021}}]\label{thm:EY}
Let $\Mmat=cW$ as above and
$\tens{A}_k=\tens{U}_{:,1:k,:}\starM\tens{S}_{1:k,1:k,:}\starM\tens{V}_{:,1:k,:}\trans$.
Then $\tens{A}_k$ is the best Frobenius-norm approximation to $\tens{A}$
among all tensors $\tens{X}\starM\tens{Y}$ with
$\tens{X}\in\R^{m\times k\times n}$, $\tens{Y}\in\R^{k\times p\times n}$
(the t-rank-$k$ tensors), and
$\fnorm{\tens{A}-\tens{A}_k}^2=\sum_{i>k}\fnorm{\mathbf{s}_i}^2$.
\end{theorem}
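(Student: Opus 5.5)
The plan is to reduce the statement to the classical matrix Eckart--Young theorem, applied face by face in the transform domain. Two facts recalled above make this possible. First, the $\starM$-product is facewise multiplication after applying $\Mmat$. Second, for $\Mmat=cW$ the Frobenius norm splits across faces as $\fnorm{\tens{B}}^2=\tfrac1{c^2}\sum_i\fnorm{\face{\hat B}{i}}^2$.

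\emph{Step 1: the error of $\tens{A}_k$.} Since $\tens{U}$, $\tens{S}$, $\tens{V}$ come from facewise SVDs of $\hatT{A}$, the transform-domain faces of $\tens{A}_k$ are the truncated matrix SVDs
\[
\face{\hat A_k}{i}=\face{\hat U}{i}_{:,1:k}\,\face{\hat S}{i}_{1:k,1:k}\,(\face{\hat V}{i}_{:,1:k})\trans .
\]
Linearity of $\times_3\Mmat$ and the face decomposition of the norm then give
\[
\fnorm{\tens{A}-\tens{A}_k}^2=\tfrac1{c^2}\sum_i\sum_{j>k}(\hat\sigma^{(i)}_j)^2 .
\]
We regroup by index $j$. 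The $j$th singular tube $\mathbf{s}_j$ has transform $\hat\sigma^{(1)}_j,\dots,\hat\sigma^{(n)}_j$, so the same norm identity gives $\fnorm{\mathbf{s}_j}^2=\tfrac1{c^2}\sum_i(\hat\sigma^{(i)}_j)^2$. This yields the stated formula $\sum_{j>k}\fnorm{\mathbf{s}_j}^2$.

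\emph{Step 2: optimality.} Let $\tens{X}\in\R^{m\times k\times n}$ and $\tens{Y}\in\R^{k\times p\times n}$ be arbitrary. By Definition~\ref{def:starM}, the $i$th transform face of $\tens{X}\starM\tens{Y}$ is $\face{\hat X}{i}\face{\hat Y}{i}$, which is a matrix of rank at most $k$. The matrix Eckart--Young theorem then gives
\[
\fnorm{\face{\hat A}{i}-\face{\hat X}{i}\face{\hat Y}{i}}^2\ge\sum_{j>k}(\hat\sigma^{(i)}_j)^2 .
\]
Summing over $i$ and multiplying by $1/c^2$ gives $\fnorm{\tens{A}-\tens{X}\starM\tens{Y}}^2\ge\fnorm{\tens{A}-\tens{A}_k}^2$.

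\emph{Step 3: $\tens{A}_k$ is admissible.} We must check that $\tens{A}_k$ itself has the form $\tens{X}\starM\tens{Y}$. Take $\tens{X}=\tens{U}_{:,1:k,:}$ and $\tens{Y}=\tens{S}_{1:k,1:k,:}\starM\tens{V}_{:,1:k,:}\trans$; these have the required sizes.

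\emph{Main obstacle.} The only delicate point is making sure the norm-splitting identity is used consistently. In particular, the tube norms $\fnorm{\mathbf{s}_j}$ must be computed in the spatial domain, so the factor $1/c^2$ has to be applied to both the error and the lower bound. Orthogonality of $W$ is used exactly there. Non-orthogonal $\Mmat$ would break the face-sum identity and hence the argument.
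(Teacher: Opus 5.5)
Your proposal is correct and takes essentially the same route as the proof behind this result, which the paper does not reprove but cites from \cite[Thm.~3.7]{KHAN2021}: with $\Mmat=cW$ the squared Frobenius norm splits over the transform-domain faces, and every face of $\tens{X}\starM\tens{Y}$ has rank at most $k$. Matrix Eckart--Young applied face by face then gives the lower bound, which $\tens{A}_k$ attains because its faces are the truncated face SVDs. You also rely, without stating it, on each face's singular values being in non-increasing order as the facewise SVD produces them, since f-diagonality of $\tens{S}$ alone would not make truncation optimal; this is exactly the convention the paper and \cite{KHAN2021} adopt.
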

\begin{theorem}[{Multirank truncation \cite[Thm.~3.8]{KHAN2021}}]\label{thm:multirank}
Let $\tens{A}_\rho$ keep the leading $\rho_i$ singular values and vectors of face $i$,
$(\hatT{A}_\rho)_{:,:,i}=\hat U_{:,1:\rho_i,i}\hat S_{1:\rho_i,1:\rho_i,i}\hat V_{:,1:\rho_i,i}\trans$.
Then $\tens{A}_\rho$ is the best multirank-$\rho$ approximation to $\tens{A}$
in the Frobenius norm and
$\fnorm{\tens{A}-\tens{A}_\rho}^2=\tfrac1{c^2}\sum_{i=1}^n\sum_{j>\rho_i}(\hat\sigma^{(i)}_j)^2$,
where $\hat\sigma^{(i)}_j$ are the singular values of $\face{\hat A}{i}$.
\end{theorem}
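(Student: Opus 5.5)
The statement to prove is Theorem~\ref{thm:multirank}: $\tens{A}_\rho$ is a best multirank-$\rho$ approximation of $\tens{A}$, with the stated error. The plan is to move everything to the transform domain, where the problem splits into $n$ independent matrix problems. Each of those is then settled by the matrix Eckart--Young theorem.

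First I fix the meaning of ``multirank-$\rho$ approximation.'' It is a tensor $\tens{B}\in\R^{m\times p\times n}$ with $\rank\face{\hat B}{i}\le\rho_i$ for every $i$. Since $\hatT{B}=\tens{B}\times_3\Mmat$ and $\Mmat$ is invertible, the map $\tens{B}\mapsto\hatT{B}$ is a bijection. It therefore matches the feasible set with the set of transform-domain tensors whose faces satisfy these $n$ separate rank constraints. The constraint set is thus a product $\prod_i\{\hat B^{(i)}:\rank\hat B^{(i)}\le\rho_i\}$.

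Next I use the norm identity stated above for $\Mmat=cW$: $\fnorm{\tens{A}-\tens{B}}^2=\tfrac1{c^2}\sum_{i=1}^n\fnorm{\face{\hat A}{i}-\face{\hat B}{i}}^2$. This holds because $\tens{A}-\tens{B}$ maps to $\hatT{A}-\hatT{B}$ linearly, and $\fnorm{\tens{X}\times_3 W}=\fnorm{\tens{X}}$ for orthogonal $W$, applied tube by tube. The objective is now a sum of nonnegative terms, and the $i$th term depends only on $\face{\hat B}{i}$. The feasible set is a product of per-face sets, so the joint minimum is the sum of the per-face minima, and a joint minimizer is obtained by minimizing each face independently.

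For each face, the matrix Eckart--Young--Mirsky theorem says the truncated SVD $\hat U_{:,1:\rho_i,i}\hat S_{1:\rho_i,1:\rho_i,i}\hat V_{:,1:\rho_i,i}\trans$ minimizes $\fnorm{\face{\hat A}{i}-X}$ over $\rank X\le\rho_i$. The minimum value is $\sum_{j>\rho_i}(\hat\sigma^{(i)}_j)^2$. This truncated SVD is exactly the $i$th face of $\hatT{A}_\rho$ as defined in the theorem. Summing over $i$ and multiplying by $1/c^2$ gives the stated error formula, and shows that $\tens{A}_\rho$ attains the minimum.

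The main obstacle is only bookkeeping. One point is justifying the face-wise norm identity with the $1/c^2$ scaling, which amounts to $\fnorm{\tens{X}\times_3\Mmat}^2=c^2\fnorm{\tens{X}}^2$. The other is making precise that multirank is defined through $\hatT{B}$, so that the constraints decouple across faces.
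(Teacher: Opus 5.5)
Your proposal is correct and follows essentially the same route as the proof this result relies on. The paper does not reprove the theorem; it defers to \cite[Thm.~3.8]{KHAN2021}, where the argument is the one you give: use the face-wise norm identity $\fnorm{\tens{X}}^2=\tfrac1{c^2}\sum_i\fnorm{\face{\hat X}{i}}^2$ for $\Mmat=cW$, note that the invertibility of $\Mmat$ makes the multirank constraint decouple across transform-domain faces, and apply the matrix Eckart--Young theorem to each face separately.
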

Theorem~\ref{thm:EY} truncates every face to the same $k$, while the t-SVDMII of
\cite{HaoKBH2013,KHAN2021} instead chooses $\rho$ from the data: sort all
$(\hat\sigma^{(i)}_j)^2$ in decreasing order, keep the largest ones whose
cumulative share of $\fnorm{\tens{A}}^2$ is at most $\gamma$ (the variant in
\cite[Alg.~3]{KHAN2021} keeps one further value, the one that first exceeds
$\gamma$), and keep in each face the singular values (with their singular
vectors) whose squared singular value is at least the last kept one.
Algorithm~\ref{alg:symsvd} below is this truncation applied to a $\starM$-symmetric tensor.

Finally, the tensor truncation is never worse than the matrix one on the same
data.
\begin{theorem}[{\cite[Thms.~5.3, 5.5]{KHAN2021}}]\label{thm:tensor-beats-matrix}
Stack the same $p$ images as columns of $A\in\R^{mn\times p}$ and let $A_k$ be
its best rank-$k$ approximation.  Then
$\fnorm{\tens{A}-\tens{A}_k}\le\fnorm{A-A_k}$, and there is a $\gamma$ for
which the t-SVDMII approximation $\tens{A}_\rho$ has implicit rank at most
that of $\tens{A}_k$ and $\fnorm{\tens{A}-\tens{A}_\rho}\le\fnorm{\tens{A}-\tens{A}_k}$.
\end{theorem}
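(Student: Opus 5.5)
The plan is to work entirely in the transform domain, using the fact that $\Mmat$ acts only along the third mode. Stacking the same $p$ images as columns of $A\in\R^{mn\times p}$ is a row permutation of the mode-3 unfolding: up to reordering rows, $A$ is the matrix $\tens{A}_{(3)}$ reshaped so that each column is the image of one lateral slice. Let $P$ be that permutation and set $B = (I_m\otimes \Mmat) P A$. Since $\Mmat = cW$ with $W$ orthogonal, $B$ is $c$ times an orthogonal transformation of $A$. Reordering the rows of $B$ face by face gives the block stacking
\[
\begin{bmatrix}\face{\hat A}{1}\\ \vdots \\ \face{\hat A}{n}\end{bmatrix}.
\]
This matrix has the same singular values as $cA$. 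The two claims then reduce to comparing the rank-$k$ truncation of this tall stacked matrix with truncating each block separately.

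For the first inequality, let $A_k = Q Q\trans A$ with $Q\in\R^{mn\times k}$ orthonormal. I would push $A_k$ through the same map to get the stacked matrix $(I\otimes\Mmat)PA_k$. Its $i$th block is $\face{\hat A}{i}$ multiplied on the right by a common $p\times p$ matrix of rank $\le k$. Concretely, $A_k = A\,Z$ with $Z = V_kV_k\trans$, where $V_k$ holds the top $k$ right singular vectors of $A$. Each block is therefore $\face{\hat A}{i} V_kV_k\trans$, which has rank at most $k$. Re-folding, $A_k$ corresponds to the tensor $\tens{A}\starM(\tens{Z})$, where $\tens{Z}$ has every transform face equal to $V_kV_k\trans$. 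This tensor is a t-rank-$\le k$ product $(\tens{A}\starM\tens{V}')\starM\tens{V}'^{\trans}$, with $\tens{V}'$ built facewise from $V_k$. Since the reshaping is norm preserving, $\fnorm{A-A_k}$ equals the tensor Frobenius error of this competitor. Theorem~\ref{thm:EY} then gives $\fnorm{\tens{A}-\tens{A}_k}\le\fnorm{A-A_k}$.

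For the second claim I would argue directly with singular values. Each transform face $\face{\hat A}{i}$ is a block row of the stacked matrix, scaled by $c$ relative to $A$. The facewise competitor above has implicit rank $\le nk$, which is too coarse, so a different argument is needed. Let $\sigma_1\ge\sigma_2\ge\cdots$ be the singular values of $cA$. The pooled facewise values $(\hat\sigma^{(i)}_j)^2$ have the same total sum as the $\sigma_j^2$, since both equal $c^2\fnorm{\tens{A}}^2$. The key inequality is a majorization: for every $t$, the sum of the largest $t$ pooled values $(\hat\sigma^{(i)}_j)^2$ is at least $\sum_{j\le t}\sigma_j^2$. Proof sketch: $\sum_{j\le t}\sigma_j^2=\fnorm{\hat{A}_{\rm stack} V_t}^2 = \sum_i \fnorm{\face{\hat A}{i} V_t}^2$. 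Write $\rho_i$ for the number of directions face $i$ effectively uses, with $\sum_i\rho_i\le t$ after splitting $V_t$'s contribution. This step is the main obstacle. I would instead use Ky Fan: $\sum_i\fnorm{\face{\hat A}{i}V_t}^2\le\sum_i\sum_{j\le t}(\hat\sigma^{(i)}_j)^2$, but that allows $nt$ terms, not $t$.

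So I would take the result the way the statement frames it, with implicit rank measured against $\tens{A}_k$ (which has implicit rank $\le nk$) rather than against $A_k$. Choose $\gamma$ so that t-SVDMII keeps exactly the $\sum_i\rho_i(\tens{A}_k)$ largest pooled squared singular values, where $\rho_i(\tens{A}_k)=\min(k,\rank\face{\hat A}{i})$. Then the implicit rank matches. The error $\tfrac1{c^2}\sum(\text{discarded})$, by Theorem~\ref{thm:multirank}, is at most that of $\tens{A}_k$, because a greedy choice of the globally largest values minimizes the discarded sum among all selections of the same count. The delicate point is ties and the ``one further value'' convention of \cite[Alg.~3]{KHAN2021}; I would handle it by taking $\gamma$ equal to the cumulative share at exactly that count.
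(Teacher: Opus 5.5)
Your final argument is correct and follows the paper's route. For the first inequality you spell out the paper's one-line reason: the folded $A_k=AV_kV_k\trans$ equals $(\tens{A}\starM\tens{V}')\starM(\tens{V}')\trans$, a t-rank-$\le k$ tensor, so Theorem~\ref{thm:EY} applies. For the second you use the same greedy pooled-selection argument via Theorem~\ref{thm:multirank} that the paper uses to prove Corollary~\ref{cor:beats-matrix}.

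The majorization you abandoned is not just hard to prove; it is false, and the reverse inequality always holds. Take every transform face equal to $e_1e_1\trans\in\R^{m\times p}$, with $c=1$ and $n\ge2$. Then the stacked matrix has rank one with $\sigma_1^2=n$, while the largest pooled value is $1$. In general, fix any split $t=\sum_i t_i$ and let $P$ be the projector onto the sum of the faces' top-$t_i$ right singular subspaces. Then $P$ has rank at most $t$ and $\sum_i\fnorm{\face{\hat A}{i}P}^2\ge\sum_i\sum_{j\le t_i}(\hat\sigma^{(i)}_j)^2$. By Ky Fan, $\sum_{j\le t}\sigma_j^2$ therefore dominates every top-$t$ pooled sum. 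This is why the comparison is against $\tens{A}_k$ (implicit rank up to $nk$, the same $kmn$ storage as $A_k$) and not against an implicit-rank-$k$ budget. Dropping that line was right.

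The genuine flaw is your handling of ties. t-SVDMII keeps, in every face, every value with $(\hat\sigma^{(i)}_j)^2\ge\tau$. So when the $r$th and $(r{+}1)$st pooled values coincide, no choice of $\gamma$ (under either truncation convention) cuts at exactly $r$. Setting $\gamma$ equal to the cumulative share at that count does not help. For example, take $n=2$, $\face{\hat A}{1}=\face{\hat A}{2}=I_2$ and $k=1$. Then $\tens{A}_1$ has implicit rank $2$ and squared error $2$. Any nonempty t-SVDMII selection has $\tau=1$ and keeps all four values, giving implicit rank $4$; keeping nothing gives squared error $4$. So the second claim as stated needs the no-tie hypothesis that the paper imposes in Corollary~\ref{cor:beats-matrix}, or a tie-breaking rule in the truncation. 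With that hypothesis your argument is complete.
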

The first inequality holds because the truncated matrix SVD is itself a
t-rank-$k$ tensor, so Theorem~\ref{thm:EY} applies. The tensor basis
$\tens{U}_{:,1:k,:}$ and the matrix basis $U_{:,1:k}$ both require the storage of $kmn$ numbers.

\section{\texorpdfstring{$\starM$}{starM}-Symmetric Tensors}\label{sec:symmetric}
As noted in the previous section, the $\starM$-SVD computation relies on matrix-based SVDs in the so-called transform domain.  
In order to devise a symmetry-preserving $\starM$-SVD, we therefore first describe the symmetry-preserving matrix-based SVD and we will use this to build our tensor decomposition.  

\subsection{The matrix symmetry-preserving SVD}\label{sec:spsvd}
We recall the matrix construction we extend.  Let $B\in\R^{m\times p}$ ($m=2h$) have
reflectively symmetric rows: with $X_0=B_{1:h,:}$, $X_1=B_{h+1:m,:}$ and the
$h\times h$ row-reversal $R$ (the anti-identity, $R\trans=R$, $R^2=I$), one has
$RX_1=X_0$.  Shah and Sorensen \cite{ShahSorensen2006} observed that the (economy)
SVD of $B$ is obtained from the half-size matrix $\bar B=\tfrac12(X_0+RX_1)$: if
$\bar B=U_0S_0V_0\trans$, then
\begin{equation}\label{eq:matspsvd}
  U=\tfrac{1}{\sqrt2}\begin{bmatrix}U_0\\ RU_0\end{bmatrix},\qquad
  S=\sqrt2\,S_0,\qquad V=V_0,
\end{equation}
gives $B=USV\trans$ with $U$ exactly symmetric ($U_{h+1:m,:}=RU_{1:h,:}$).  When
$RX_1=X_0+E$ (imperfect symmetry), the same $U,S,V$ built from
$\bar B=\tfrac12(X_0+RX_1)$ yield the Frobenius-norm-optimal symmetric
approximation of $B$ \cite[Thm.~5.1]{ShahSorensen2006}. Equivalently, every $X_0$ 
splits into a symmetric part $\tfrac12(X_0+RX_1)$ and an antisymmetric part
$\tfrac12(X_0-RX_1)$, and the construction keeps only the former.

\begin{example}[A row-symmetric $4\times4$ matrix]\label{ex:44}
Let $R=\left[\begin{smallmatrix}0&1\\1&0\end{smallmatrix}\right]$ and
\[
  B=\begin{bmatrix}4&3&0&0\\ 0&0&2&1\\ 0&0&2&1\\ 4&3&0&0\end{bmatrix},
  \qquad X_0=\begin{bmatrix}4&3&0&0\\0&0&2&1\end{bmatrix},\quad
  X_1=\begin{bmatrix}0&0&2&1\\4&3&0&0\end{bmatrix},
\]
so $RX_1=X_0$.  The fold $\bar B=\tfrac12(X_0+RX_1)=X_0$ has singular values
$s_0=(5,\sqrt5)$, and \eqref{eq:matspsvd} $B$ has singular values
$\sqrt2\,s_0=(\sqrt{50},\sqrt{10})$. The best rank-one approximation has relative
error $\sqrt{10}/\sqrt{60}\approx0.408$.  Because the left singular vectors are
symmetric, only their top halves need be stored.
\end{example}

\subsection{Extending the construction to tensors}
We now extend the reflective structure of Section~\ref{sec:spsvd} to the
$\starM$-algebra.  Let $\Rt\in\R^{h\times h\times n}$ be the reflection
tensor whose every transform-domain frontal slice equals the $h\times h$
row-reversal $R$, i.e.\ $\face{\hat R}{i}=R$ for all $i$.  Then $\Rt\starM\tens{B}$
has transform-domain frontal slices $R\,\face{\hat B}{i}$, so $\Rt\starM$ acts as
the facewise row-reflection. Row reflection acts on mode~1 and the transform on mode~3,
so they commute: $\Rt\starM\tens{B}$ is exactly $\tens{B}$ with its rows reflected in the spatial domain.

\begin{definition}[$\starM$-symmetric tensor] \label{def:sym}
A tensor $\tens{A}\in\R^{m\times p\times n}$ ($m=2h$) is
$\starM$-symmetric if, writing
$\tens{X}_0=\tens{A}_{1:h,:,:}$ and $\tens{X}_1=\tens{A}_{h+1:m,:,:}$, one has
\[
  \tens{X}_0=\Rt\starM\tens{X}_1,
\]
equivalently, in the transform domain, $\face{\hat A}{i}_{1:h,:}
=R\,\face{\hat A}{i}_{h+1:m,:}$ for every $i$: each frontal slice
$\face{\hat A}{i}$ is row-symmetric.
\end{definition}

\begin{lemma}[Facewise characterization]\label{lem:facewise}
$\tens{A}$ is $\starM$-symmetric if and only if every transform-domain
frontal slice $\face{\hat A}{i}$ is row-symmetric in the sense of
Section~\ref{sec:spsvd}.
\end{lemma}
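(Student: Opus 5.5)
The plan is to unwind Definition~\ref{def:sym} through the transform and compare faces; the argument is short and rests on two facts. The first is that slicing in mode~1 commutes with the mode-3 product. The second is that $\Mmat$ is invertible, so two tensors agree if and only if their transforms agree.

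First I will check that $\widehat{\tens{X}_0}=\hatT{A}_{1:h,:,:}$ and $\widehat{\tens{X}_1}=\hatT{A}_{h+1:m,:,:}$. This holds because $\times_3\Mmat$ acts tube by tube, i.e.\ on each $(i,j)$ tube independently. Restricting the row index $i$ to $1{:}h$ or $h{+}1{:}m$ before or after applying $\Mmat$ therefore gives the same result. Concretely, $\tens{X}_{0}\,{}_{(3)}$ consists of a subset of the columns of $\tens{A}_{(3)}$, and left multiplication by $\Mmat$ preserves column selection. Hence the $i$th transform-domain face of $\tens{X}_0$ is $\face{\hat A}{i}_{1:h,:}$, and that of $\tens{X}_1$ is $\face{\hat A}{i}_{h+1:m,:}$.

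Next I will apply Definition~\ref{def:starM} with $\face{\hat R}{i}=R$. This shows that $\Rt\starM\tens{X}_1$ is the tensor whose transform-domain faces are $R\,\face{\hat A}{i}_{h+1:m,:}$.

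Since $\Mmat$ is invertible, the map $\tens{B}\mapsto\hatT{B}$ is a bijection. So $\tens{X}_0=\Rt\starM\tens{X}_1$ holds if and only if the transforms agree face by face, that is,
\[
\face{\hat A}{i}_{1:h,:}=R\,\face{\hat A}{i}_{h+1:m,:}\quad\text{for all } i.
\]
This is exactly the row-symmetry condition $RX_1=X_0$ of Section~\ref{sec:spsvd} applied to $B=\face{\hat A}{i}$, which gives both directions at once.

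There is no real obstacle. The only step requiring care is the first one, the commutation of row slicing with the mode-3 transform. As a remark, I will also record the spatial-domain reading noted before the definition: since $\Mmat^{-1}$ likewise acts only on mode~3, the condition is equivalent to $\tens{A}_{1:h,:,:}$ being the row-reversal of $\tens{A}_{h+1:m,:,:}$ in the original domain. Both equivalences follow from the same commutation argument.
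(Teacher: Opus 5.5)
Your proof is correct and follows the same route as the paper's. Both pass to the transform domain, read $\Rt\starM\tens{X}_1$ facewise as $R\,\face{\hat X_1}{i}$ by the definition of the $\starM$-product, and use invertibility of $\Mmat$ to get both directions at once. You are more explicit than the paper in checking that the transforms of $\tens{X}_0$ and $\tens{X}_1$ are the top and bottom halves of $\hatT{A}$; the paper leaves this implicit in its remark that the reflection commutes with the mode-3 transform.
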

\begin{proof}
Apply $\Mmat$ along the third mode.  Since $R$ is constant across that mode, the
tensor relation $\tens{X}_0=\Rt\starM\tens{X}_1$ is, facewise,
$\face{\hat X_0}{i}=R\,\face{\hat X_1}{i}$ for every $i$ (the reflection commutes
with the mode-3 transform).  The result follows because $\Mmat$ is invertible.
The commutation is exact: $\Rt$ acts on mode~1 and $\Mmat$ on mode~3, so the two
operations commute for any invertible $\Mmat$.
\end{proof}

\subsection{From bilaterally symmetric images to a row-symmetric tensor}
\label{sec:tensor-construction}
Definition~\ref{def:sym} is a statement about the \emph{rows} of the frontal
slices, whereas the symmetry we want to exploit in an image collection is
\emph{bilateral}: a frontal face, a leaf, or a butterfly is (approximately)
symmetric across a vertical axis, i.e.\ across the columns of the image.  The two
are reconciled by how the images are placed in the tensor, so we state this
explicitly.

Let $I_1,\dots,I_p\in\R^{n\times m}$ be a collection of $p$ images, each with $n$
rows and $m$ columns of pixels, $m=2h$, whose bilateral axis is the vertical
midline, so that ideally $I_j(\ell,i)=I_j(\ell,m+1-i)$. Following
\cite{HaoKBH2013}, each image is stored as a \emph{lateral} slice of
$\tens{A}\in\R^{m\times p\times n}$. Before it is inserted, the image is
turned on its side (a $90^\circ$ rotation) and
\begin{equation}\label{eq:tensor-construction}
  \tens{A}(i,j,\ell)=I_j\bigl(n+1-\ell,\;i\bigr),\qquad
  i=1,\dots,m,\quad j=1,\dots,p,\quad \ell=1,\dots,n .
\end{equation}
Thus mode~1 of $\tens{A}$ indexes the \emph{horizontal} position within an
image, mode~2 indexes the images, and mode~3 (the tube direction, along which
$\Mmat$ acts) indexes the vertical position.  In words: the $\ell$-th
frontal slice $\face{A}{\ell}\in\R^{m\times p}$ holds one row of every
image as a column, so its rows are horizontal positions across the object.  A
bilaterally symmetric image therefore contributes a column that is symmetric
under the row-reversal $R$, and if every image is bilaterally symmetric then
every frontal slice is row-symmetric and $\tens{A}$ is
$\starM$-symmetric in the sense of Definition~\ref{def:sym} (by
Lemma~\ref{lem:facewise}, the mode-3 transform preserves this).  With
this placement the reflection tensor $\Rt$ acts on mode~1, i.e.\ it mirrors each
image about its vertical axis, and the ``top half'' $\tens{U}_0$ of the basis
in Theorem~\ref{thm:symsvd} is precisely the left half of each basis image.

\begin{remark}
One could instead transpose the images and place them
as \emph{frontal} slices, which also makes each face row-symmetric. 
We keep the lateral-slice convention from \cite{HaoKBH2013,KBHH2013}: it makes
each image a ``vector'' $\lat{A}{j}$ in the $\starM$-module, so that the
$\starM$-SVD compresses across the collection in the same way the matrix
SVD compresses a data matrix whose columns are images, and the tube transform
$\Mmat$ acts along a spatial mode of the image, where it can exploit correlations
between neighboring rows.
\end{remark}

\section{The Symmetry-Preserving \texorpdfstring{$\starM$}{starM}-SVD} \label{sec:sym-svd} 
We now state the main factorization, which we call the symmetry-preserving
tensor SVD (sptSVD).

\begin{theorem}[Symmetry-preserving $\starM$-SVD] \label{thm:symsvd}
Let $\tens{A}\in\R^{m\times p\times n}$ ($m=2h$) be $\starM$-symmetric
with $\tens{X}_0=\tens{A}_{1:h,:,:}$, $\tens{X}_1=\tens{A}_{h+1:m,:,:}$, and let
\[
  \tens{B}=\tfrac12\bigl(\tens{X}_0+\Rt\starM\tens{X}_1\bigr)\in\R^{h\times p\times n}
\]
have $\starM$-SVD $\tens{B}=\tens{U}_0\starM\tens{S}_0\starM\tens{V}_0\trans$.
Then
\[
  \tens{A}=\tens{U}\starM\tens{S}\starM\tens{V}\trans,\qquad
  \tens{U}=\tfrac{1}{\sqrt2}\begin{bmatrix}\tens{U}_0\\[2pt]\Rt\starM\tens{U}_0\end{bmatrix},
  \quad \tens{S}=\sqrt2\,\tens{S}_0,\quad \tens{V}=\tens{V}_0,
\]
is an economy $\starM$-SVD of $\tens{A}$ in which $\tens{U}\in\R^{m\times r\times n}$
($r\le h$) has $\starM$-orthonormal columns ($\tens{U}\trans\starM\tens{U}$ is the
$r\times r\times n$ identity) and is itself $\starM$-symmetric,
$\tens{U}_{h+1:m,:,:}=\Rt\starM\tens{U}_{1:h,:,:}$.
Consequently only the top half $\tens{U}_0$ (and $\tens{V}$) need be stored.  The
singular tubes of $\tens{S}$ satisfy $\fnorm{s_j}^2=2\fnorm{(s_0)_j}^2$ and are
ordered by non-increasing Frobenius norm.
\end{theorem}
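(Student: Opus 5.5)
The plan is to reduce everything to the transform domain, where Lemma~\ref{lem:facewise} and the facewise definitions of Section~\ref{sec:background} turn each claim into a matrix statement about one frontal slice. The matrix statements will then follow from the Shah--Sorensen construction \eqref{eq:matspsvd} applied face by face. Concretely, I would first apply $\times_3\Mmat$ to every object in the statement. Since $\Rt\starM$ acts as left multiplication by $R$ on each face, we get $\face{\hat B}{i}=\tfrac12(\face{\hat X_0}{i}+R\face{\hat X_1}{i})$. By Lemma~\ref{lem:facewise}, $R\face{\hat X_1}{i}=\face{\hat X_0}{i}$, so $\face{\hat B}{i}=\face{\hat X_0}{i}$. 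The $\starM$-SVD of $\tens{B}$ is computed facewise, giving $\face{\hat B}{i}=\face{\hat U_0}{i}\face{\hat S_0}{i}(\face{\hat V_0}{i})\trans$ with $\face{\hat U_0}{i}$ having orthonormal columns. I take $r\le h$ to be the common number of columns kept, as in the economy version.

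Next I would verify the factorization face by face. The $i$th transform-domain face of $\tens{U}$ is $\tfrac1{\sqrt2}\left[\begin{smallmatrix}\face{\hat U_0}{i}\\ R\face{\hat U_0}{i}\end{smallmatrix}\right]$. Hence the face of $\tens{U}\starM\tens{S}\starM\tens{V}\trans$ is
\[
\left[\begin{smallmatrix}\face{\hat U_0}{i}\face{\hat S_0}{i}(\face{\hat V_0}{i})\trans\\ R\,\face{\hat U_0}{i}\face{\hat S_0}{i}(\face{\hat V_0}{i})\trans\end{smallmatrix}\right]
=\left[\begin{smallmatrix}\face{\hat X_0}{i}\\ R\face{\hat X_0}{i}\end{smallmatrix}\right]
=\face{\hat A}{i},
\]
where the last step uses $R\face{\hat X_0}{i}=R^2\face{\hat X_1}{i}=\face{\hat X_1}{i}$. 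Applying $\times_3\Mmat^{-1}$ then gives $\tens{A}=\tens{U}\starM\tens{S}\starM\tens{V}\trans$.

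For orthonormality, $(\face{\hat U}{i})\trans\face{\hat U}{i}=\tfrac12\bigl((\face{\hat U_0}{i})\trans\face{\hat U_0}{i}+(\face{\hat U_0}{i})\trans R\trans R\face{\hat U_0}{i}\bigr)=I_r$, since $R\trans R=I$. So $\tens{U}\trans\starM\tens{U}$ is the $r\times r\times n$ identity. The other factors need little work: $\tens{V}=\tens{V}_0$ is already $\starM$-orthonormal, and $\tens{S}$ is f-diagonal with nonnegative diagonal faces, being a positive multiple of $\tens{S}_0$. Symmetry of $\tens{U}$ is immediate from its block form: $\tens{U}_{h+1:m,:,:}=\tfrac1{\sqrt2}\Rt\starM\tens{U}_0=\Rt\starM\tens{U}_{1:h,:,:}$. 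Storage of only $\tens{U}_0$ follows because $\Rt\starM$ is just a mode-1 reversal in the spatial domain, as noted before Definition~\ref{def:sym}.

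Finally, for the singular tubes, $s_j=\sqrt2\,(s_0)_j$ entrywise, so $\fnorm{s_j}^2=2\fnorm{(s_0)_j}^2$. Multiplying by the positive constant $\sqrt2$ preserves the non-increasing ordering that $\tens{S}_0$ inherits from \cite[Cor.~3.3]{KHAN2021}.

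I expect no real obstacle. The only point needing care is that the facewise singular values on each face of $\hat{\tens{S}}$ remain sorted in decreasing order, so that this is a genuine $\starM$-SVD in the sense of \cite[Alg.~2]{KHAN2021}. Scaling by $\sqrt2$ preserves that order, which is what lets the Eckart--Young and multirank results (Theorems~\ref{thm:EY} and \ref{thm:multirank}) apply to truncations of it.
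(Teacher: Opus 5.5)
Your proposal is correct and follows essentially the same route as the paper's proof: pass to the transform domain, use Lemma~\ref{lem:facewise} to apply the Shah--Sorensen construction face by face, and transform back, with the tube ordering inherited from the sorted facewise singular values. The only difference is that the paper simply cites \eqref{eq:matspsvd}, whereas you verify $\face{\hat A}{i}=\face{\hat U}{i}\face{\hat S}{i}(\face{\hat V}{i})\trans$ and the orthonormality directly, using $\face{\hat B}{i}=\face{\hat X_0}{i}$ and $R^2=R\trans R=I$.
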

\begin{proof}
Apply $\Mmat$ along the third mode.  By Lemma~\ref{lem:facewise} each
$\face{\hat A}{i}$ is row-symmetric, so the matrix construction
\eqref{eq:matspsvd} of Shah and Sorensen applies face-wise: with
$\face{\hat B}{i}=\tfrac12(\face{\hat X_0}{i}+R\face{\hat X_1}{i})$ and its SVD
$\face{\hat U_0}{i}\face{\hat S_0}{i}(\face{\hat V_0}{i})\trans$, the matrices
$\face{\hat U}{i}=\tfrac1{\sqrt2}[\face{\hat U_0}{i};\,R\face{\hat U_0}{i}]$,
$\face{\hat S}{i}=\sqrt2\,\face{\hat S_0}{i}$, $\face{\hat V}{i}=\face{\hat V_0}{i}$
form an SVD of $\face{\hat A}{i}$ with $\face{\hat U}{i}$ symmetric and orthonormal.
Collect these face-wise factors and invert $\Mmat$; the result follows because
$\Mmat$ is invertible and $\Rt\starM$ acts face-wise as $R$. Each face-wise SVD
lists its singular values in non-increasing order, so the $j$th singular tube,
whose squared Frobenius norm is $\sum_i(\face{\hat S}{i})_{jj}^2$ (up to the
constant of $\Mmat$), has a non-increasing norm in $j$ as well.
\end{proof}

A major consequence of Theorem~\ref{thm:symsvd} is an Eckart--Young theorem
for the sptSVD: because $\tens{U}\starM\tens{S}\starM\tens{V}\trans$ is
itself a $\starM$-SVD, Theorem~\ref{thm:EY} applies, so truncating to the
$k$ largest singular tubes gives the best $\starM$-rank-$k$ approximation of
$\tens{A}$ in the Frobenius norm, and it is still symmetric.

\section{Algorithms}\label{sec:algorithms}
\begin{algorithm}[t]
\caption{Symmetry-preserving $\starM$-SVD (sptSVD)}\label{alg:symsvd}
\begin{algorithmic}[1]
\Require $\starM$-symmetric tensor $\tens{A}\in\R^{m\times p\times n}$ ($m=2h$),
  orthogonal transform $\Mmat$ (we use the DCT), energy fraction $\gamma\in[0,1]$.
\State $\hatT{A}\gets\tens{A}\times_3\Mmat$ \Comment{move to the transform domain}
\For{each transform slice $\hat A_i$, $i=1,\dots,n$}
  \State split $\hat A_i$ into its top and bottom halves $\hat A_i^{\text{top}},\hat A_i^{\text{bot}}$ (each $h\times p$)
  \State $F_i\gets\tfrac12\bigl(\hat A_i^{\text{top}}+R\,\hat A_i^{\text{bot}}\bigr)$
    \Comment{fold: average the two halves ($R$ flips rows)}
  \State $[U_i,\sigma_i,V_i]\gets\svd(F_i)$ \Comment{one economy SVD, half the rows}
  \State $e_i\gets 2\,\sigma_i^2$ \Comment{energy of each symmetric component}
\EndFor
\State pool all the $e_i$, sort them large-to-small, and keep the largest ones whose
  cumulative share of the total energy is at most $\gamma$ (at least one). Let $\tau$ be the smallest kept value
\For{each slice $i=1,\dots,n$}
  \State keep the components with $e_i\ge\tau$. for those, set
  \Statex \hspace{1.4em}top half of basis $\gets U_i/\sqrt2$,\quad
    singular values $\gets\sqrt2\,\sigma_i$,\quad right factor $\gets V_i$
\EndFor
\State transform the kept factors back with $\times_3\Mmat\trans$ to get
  $\tens{U}_0$, $\tens{S}$, $\tens{V}$
\Ensure top half $\tens{U}_0$, singular values $\tens{S}$, and $\tens{V}$.
  The bottom half of the basis is $R\,\tens{U}_0$ and is not stored. In
  practice the factors are kept and used in the transform domain
  (Section~\ref{sec:experiments}). The final inverse transform
  $\times_3\Mmat\trans$ is only meant to express the factorization in the spatial domain.
\end{algorithmic}
\end{algorithm}

\paragraph{Truncation strategy}
Algorithm~\ref{alg:symsvd} does not keep the same number of components in every face. It ranks the symmetric-component energies from all $n$ faces together against one global threshold, and each face keeps only the components of its own that clear it, so a face carrying more energy keeps more components than one carrying less, and the per-face count $\rho_i$ varies.

\begin{corollary}[Comparison with the matrix symmetry-preserving SVD]
\label{cor:beats-matrix}
Let $\tens{A}\in\R^{m\times p\times n}$ be $\starM$-symmetric with
$\Mmat$ orthogonal, and let $A\in\R^{mn\times p}$ be the same data
flattened, one image per column.  Let $A^{\mathrm{sym}}_k$ be the rank-$k$
symmetry-preserving matrix approximation of $A$ of Shah and Sorensen
\cite{ShahSorensen2006}, and $\tens{A}_k$ the rank-$k$ sptSVD truncation of
Theorem~\ref{thm:symsvd}.  Then
\[
  \fnorm{\tens{A}-\tens{A}_k}\;\le\;\fnorm{A-A^{\mathrm{sym}}_k},
\]
and both sides store the same number of basis entries, since each keeps only
the top half of its basis.  

Moreover, if $\tens{A}_k$ keeps $r$ singular values in all and $\gamma$ is the share of
$\fnorm{\tens{A}}^2$ carried by the $r$ globally largest squared singular values of $\tens{A}$ (so
$\gamma\ge\fnorm{\tens{A}_k}^2/\fnorm{\tens{A}}^2$) and the $r$th and $(r{+}1)$st
largest squared singular values are not tied, then run with this $\gamma$ matches or
beats $\tens{A}_k$ in error while keeping no more components, which is the analogue
of Theorem~5.5 from \cite{KHAN2021}.
\end{corollary}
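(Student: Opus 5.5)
The plan is to reduce both claims to results already stated, Theorem~\ref{thm:tensor-beats-matrix} and Theorems~\ref{thm:EY}--\ref{thm:multirank}, using the fact that the sptSVD is itself a $\starM$-SVD (Theorem~\ref{thm:symsvd}).

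For the first inequality, I would argue as follows. Since $\tens{A}$ is $\starM$-symmetric, every column of $A$ is reflection-symmetric under the mode-1 reflection. Each column of $A$ is one image, which is symmetric about its midline, and the flattening is only a permutation of the entries. So $A$ is row-symmetric under the permuted reflection, and the Shah--Sorensen construction \eqref{eq:matspsvd} is an exact SVD of $A$. Hence $A^{\mathrm{sym}}_k=A_k$, the ordinary best rank-$k$ approximation.

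It remains to check that the sptSVD truncation $\tens{A}_k$ equals the ordinary $\starM$-SVD truncation. Theorem~\ref{thm:symsvd} gives an economy $\starM$-SVD with tubes ordered by non-increasing norm, so Theorem~\ref{thm:EY} applies to it. Then $\fnorm{\tens{A}-\tens{A}_k}\le\fnorm{A-A_k}$ by Theorem~\ref{thm:tensor-beats-matrix}, and the inequality follows.

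Alternatively, if one wants to avoid identifying $A^{\mathrm{sym}}_k$ with $A_k$ in the imperfect case, I would argue directly as in the remark after Theorem~\ref{thm:tensor-beats-matrix}. The rank-$k$ matrix $A^{\mathrm{sym}}_k=U_kW_k\trans$, reshaped, is a t-rank-$k$ tensor $\tens{X}\starM\tens{Y}$: take $\tens{X}$ to have lateral slices equal to the reshaped columns of $U_k$, with only the first tube entry nonzero in the transform domain structure. This mirrors the KHAN2021 argument. Theorem~\ref{thm:EY} then bounds $\fnorm{\tens{A}-\tens{A}_k}$ by its error. For storage, I would simply count entries: $\tens{U}_0$ has $hkn$ entries, and the top half of the matrix basis has $(mn/2)k=hkn$.

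For the second part, I would work facewise in the transform domain. The $\gamma$-truncation of Algorithm~\ref{alg:symsvd} keeps exactly the $r$ globally largest squared singular values $2\sigma^2$ of the faces $\face{\hat A}{i}$. Non-tiedness of the $r$th and $(r{+}1)$st values ensures that the threshold $\tau$ selects precisely $r$ values, not more. The result is a multirank-$\rho$ truncation with $\sum_i\rho_i=r$.

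The error of this truncation is $\tfrac1{c^2}$ times the sum of the discarded squared singular values, by Theorem~\ref{thm:multirank}. $\tens{A}_k$ is also a facewise truncation keeping $r$ values in total, so its error is the sum of some complementary set. Keeping the $r$ largest values minimizes the discarded sum, which gives the claim.

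The main obstacle is bookkeeping rather than substance: matching the ``cumulative share at most $\gamma$'' rule with exactly the top $r$ values. The defining choice of $\gamma$ together with non-tiedness handles this, though care is needed if some $\sigma$ vanish. A second point to check is that the flattened reflection really is a row permutation conjugate of $R$, so that Shah--Sorensen applies to $A$.
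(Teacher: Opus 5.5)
Your proof is correct. Your second part is exactly the paper's argument: with this $\gamma$ and no tie, the pooled rule keeps exactly the $r$ globally largest squared singular values. By Theorem~\ref{thm:multirank} the error is the discarded energy, which is minimal among all truncations keeping $r$ values, $\tens{A}_k$ included.

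For the first inequality you take a different route. You use the exact symmetry of the flattened $A$, after a row permutation that pairs mirrored pixels, to show that the Shah--Sorensen factorization is a genuine SVD of $A$. That gives $\fnorm{A-A^{\mathrm{sym}}_k}=\fnorm{A-A_k}$, and you then invoke Theorem~\ref{thm:tensor-beats-matrix}. The paper skips the symmetry check entirely. Since $A^{\mathrm{sym}}_k$ has rank at most $k$, matrix Eckart--Young already gives $\fnorm{A-A_k}\le\fnorm{A-A^{\mathrm{sym}}_k}$. Chaining this with Theorem~\ref{thm:tensor-beats-matrix} finishes in one line, without caring how $A^{\mathrm{sym}}_k$ was built. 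Your route costs the permutation bookkeeping, but it tells you more: on exactly symmetric data the right-hand side equals the plain matrix truncation error. The corollary's inequality is then literally the first inequality of Theorem~\ref{thm:tensor-beats-matrix}.

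Your alternative reshaping argument is unnecessary for the same reason, and as written its construction is off. Scalar coefficients are realized under $\starM$ by the identity tube, whose transform-domain entries are all equal. So you should take $\tens{Y}$ with every transform-domain face equal to $W_k\trans$. A tube with only its first transform-domain entry nonzero would annihilate all faces but one. The spatial-domain $e_1$ picture of the identity tube is specific to the DFT t-product, not the DCT.

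Your caution about vanishing singular values is warranted. If the $(r{+}1)$st pooled value is $0$, the ``cumulative share at most $\gamma$'' rule admits it too. So ``no more components'' tacitly needs that value to be positive, a point the paper's proof passes over.
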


\begin{proof}
Theorem~\ref{thm:symsvd} showed the sptSVD is a genuine $\starM$-SVD of
$\tens{A}$, so $\tens{A}_k$ is the best t-rank-$k$ approximation of
$\tens{A}$ there is \cite[Thm.~3.7]{KHAN2021}. Theorem~\ref{thm:tensor-beats-matrix}
says the best t-rank-$k$ tensor approximation is at least as
good as the best rank-$k$ matrix approximation of the flattened data, and
$A^{\mathrm{sym}}_k$ is a rank-$k$ matrix, so it cannot beat the best one.
For the storage claim: the sptSVD keeps $\tens{U}_0$, which holds $h\times k\times n$ numbers,
and the matrix construction keeps the top halves of $k$ vectors of length $mn$, which is
also $hkn$ numbers.

For the last claim, the energies $e_i=2\sigma_i^2$ pooled by
Algorithm~\ref{alg:symsvd} are the squared singular values of $\tens{A}$
(Theorem~\ref{thm:symsvd}), and the squared error of any truncation equals
the energy it discards (Theorem~\ref{thm:multirank}).  With $\gamma$ as
chosen, the algorithm keeps exactly the $r$ globally largest values
(the no-tie assumption makes the cut unambiguous).  These capture at least
as much energy as the $r$ values $\tens{A}_k$ keeps, so the algorithm keeps
no more components and discards no more energy, hence no more error.
\end{proof}

\paragraph{Cost} Algorithm~\ref{alg:symsvd} transforms once along mode~3 and then
takes one economy SVD per frontal slice.  Each SVD is on the $h\times p$ fold, half
the row dimension of the ordinary $\starM$-SVD's $m\times p$ slice, so each one is
smaller.  For storage, both methods keep the same $\tens{V}$, so the only
difference is in $\tens{U}$: it drops from $md$ to $hd=\tfrac12 md$ numbers for $d$
retained tubes, so the basis storage halves.

\section{Numerical Experiments}\label{sec:experiments}
We evaluate the symmetry-preserving $\starM$-SVD on identity recognition from
pre-aligned image collections, as in \cite{HaoKBH2013}.  Symmetry raises
raw accuracy only modestly, so we instead measure how much basis storage
the symmetric basis needs to reach the accuracy of the ordinary tensor SVD.
We begin by describing the data preparation and the recognition protocol,
since every result that follows depends on them.

\subsection{Data and tensor construction}\label{sec:setup}
Every image is converted to grayscale (one channel per image,
as in \cite{HaoKBH2013}), resized to $64\times64$ pixels,
and scaled to $[0,1]$.  Sources that are not already square are
center-cropped first, except LFW, Extended Yale~B, and MUCT, which are given as
fixed non-square face crops and are resized directly (preserving
left--right symmetry). The leaf and butterfly images are instead cropped to
their bounding boxes, as described below.
We use every class (identity or species) for datasets with at most $100$ classes, and for the remaining three (CelebA, LeafSnap, MUCT), whose class counts run into the hundreds or thousands with only tens of images per class, we keep the $15$ most populous classes, ties broken by label order (Table~\ref{tab:datasets}). 
For the largest datasets we also cap the number of
images per class (between $40$ and $500$, depending on the dataset) to keep the
runtime manageable. For Extended Yale~B we use the
frontal pose and keep the $32$ mildest of its
$64$ illuminations per subject (ranked by
flash angle, with the most extreme almost entirely in shadow). The full $64$-illumination
set is evaluated separately (see ``all illuminations'' in Table~\ref{tab:main}).

\begin{table}[!htbp]
\centering
\begin{tabular}{lrr}
\toprule
dataset & \# classes & \# images \\
\midrule
Extended Yale~B ($32$ mildest illuminations) & 38 & 1216 \\
Extended Yale~B (all illuminations) & 38 & 2414 \\
AFHQ & 3 & 1500 \\
Olivetti & 40 & 400 \\
LFW & 62 & 3023 \\
CelebA & 15 & 451 \\
Swedish leaves & 15 & 1125 \\
Leeds butterflies & 10 & 832 \\
LeafSnap & 15 & 1200 \\
COIL-100 & 100 & 7200 \\
MUCT & 15 & 225 \\
\bottomrule
\end{tabular}
\caption{Classes and images used from each collection, after the selection
described in the text (for LFW, the people with $20$ images or more).
Every image is resized to $64\times64$, so $m=n=64$ and $h=32$ throughout.}
\label{tab:datasets}
\end{table}

The face datasets are pre-aligned, with their bilateral axis
at the vertical midline. The leaf and butterfly images we align ourselves. Using each
image's segmentation mask, the image is rotated so that the leaf's mid-vein or 
butterfly's body axis, respectively, is vertical, then cropped to the object's bounding box.
Three further sets (beetles, cars, BIOSCAN) were aligned the same way and evaluated, but they
are omitted from Table~\ref{tab:main} (see Section~\ref{sec:matched}).
With the bilateral axis at the vertical midline, the mode-1 reflection $R$ (the
$h\times h$ row-reversal, $h=m/2=32$) coincides with that axis, and a
frontal slice is symmetric across its middle row exactly when the underlying
images are bilaterally symmetric.  

Real images are only approximately symmetric.
We impose no exact-symmetry assumption: folding each slice with its reflection
gives the nearest symmetric tensor to the data
\cite[Thm.~5.1]{ShahSorensen2006}, so the basis is built from the data's best
symmetric approximation.

Images are placed on their sides in the tensor exactly as in Section~\ref{sec:tensor-construction}, so $\tens{A}\in\R^{m\times p\times n}$ with $m=n=64$. We take $\Mmat$ to be the orthonormal DCT (DCT-II)
along mode~3 throughout.

\subsection{Recognition protocol}\label{sec:protocol}
We use the recognition procedure of Hao, Kilmer, Braman, and Hoover
\cite[Alg.~5, ``T-SVD Method~II'']{HaoKBH2013}, the $\starM$ analogue of the
eigenfaces procedure (project onto the basis, nearest neighbor in coefficient
space), with the DCT in place of the DFT since our data is strictly real-valued. 
For one dataset and one random seed, the steps are as follows:

\begin{enumerate}
  \item \emph{Split.}  Within each class the images are shuffled and $70\%$
    go to training, the rest to test, so every class appears in both 
    (a class with a single image would be dropped).
    Let $\tens{A}\in\R^{m\times p\times n}$
    hold the $p$ training images as lateral slices and let $\lat{T}{}$
    denote a test image, arranged the same way.
  \item \emph{Centering.}  The mean lateral slice
    $\bar{\lat{A}{}}=\frac1p\sum_{j}\lat{A}{j}$ of the \emph{training} images is
    subtracted from every training and test image. Note that the test images play no
    part in the mean, the basis, or the choice of truncation.
  \item \emph{Basis.}  The centered training tensor is moved to the
    transform domain once and each frontal slice is factored there by one of
    three constructions, described below. Truncation is by the energy rule from Algorithm~\ref{alg:symsvd} with fraction $\gamma$.
    The retained vectors are kept in the transform domain and never
    transformed back. Here, $\gamma$ ranges over
\[\{0.5,\,0.6,\,0.7,\,0.8,\,0.85,\,0.9,\,0.925,\,0.95,\,0.97,\,0.98,\,0.99,\,0.995,\,0.999\};\]
    For the symmetric constructions, each face has at most $h=32$ vectors.
  \item \emph{Coefficients.}  Each centered training image and the centered
    test image are moved to the transform domain and projected face by face
    onto the retained vectors, one $\rho_i\times m$ by $m\times1$ product per
    face (for the symmetric constructions the full vectors are rebuilt from their stored halves), 
    and the coefficient vectors of all faces are concatenated. Since
    $\Mmat$ is orthogonal, distances between these coefficients equal the
    Frobenius distances $\fnorm{\tens{U}\trans\starM(\lat{A}{j}-\lat{A}{l})}$
    in the spatial domain.
  \item \emph{Match.} The test image is assigned the label of the training
    image whose coefficient tensor is nearest in the Frobenius norm,
    $j^\star=\arg\min_j\fnorm{\lat{C}{\mathrm{test}}-\lat{C}{j}}$ (nearest neighbor).
  \item \emph{Score.}  The \emph{recognition rate} is the fraction
    of test images whose assigned label is correct.  It is computed for every
    $\gamma$ and every construction, and reported as the mean over $5$ random
    splits (seeds $0$--$4$).
\end{enumerate}
The three basis constructions are:
\begin{itemize}
\item \textsf{plain}, the ordinary $\starM$-SVD basis (full $\tens{U}$, $m$ numbers
per retained vector), \item \textsf{new}, the symmetry-preserving basis of
Theorem~\ref{thm:symsvd} with $R$ the row-reversal (only the top half
$\tens{U}_0$, $h$ numbers per retained vector, is stored), and \item \textsf{rand},
a control that runs the same fold-and-halve construction with $R$ replaced by
a random pairing of the rows, i.e.\ a random symmetric permutation matrix with
no fixed points ($R=R\trans$, $R^2=I$, one draw per split, shared by all
faces and used for every $\gamma$), instead of the reflection.  Any
such $R$ is itself a reflection of the row index set, so \textsf{rand} is
Theorem~\ref{thm:symsvd} applied with the wrong mirror. 
\end{itemize}
The purpose of the \textsf{rand} is to act as a
control that separates
the effect of the mirror axis itself from the automatic $2\times$ storage cut that
\emph{any} row-pairing provides: \textsf{rand} also stores half a basis, but the
halves it pairs have nothing to do with the object's symmetry.
Folding averages each row with its partner, so
the mirror pairing barely changes a bilaterally symmetric image, while a random
pairing scrambles it (see Figure~\ref{fig:rand-example}). A blurred half-size image still carries much of the class
information, which is why \textsf{rand} typically falls between \textsf{plain}
and \textsf{new}.  A genuine symmetry effect is present only when
\textsf{new} $>$ \textsf{rand}.

\begin{figure}[!htbp]
\centering
\includegraphics[width=0.8\textwidth]{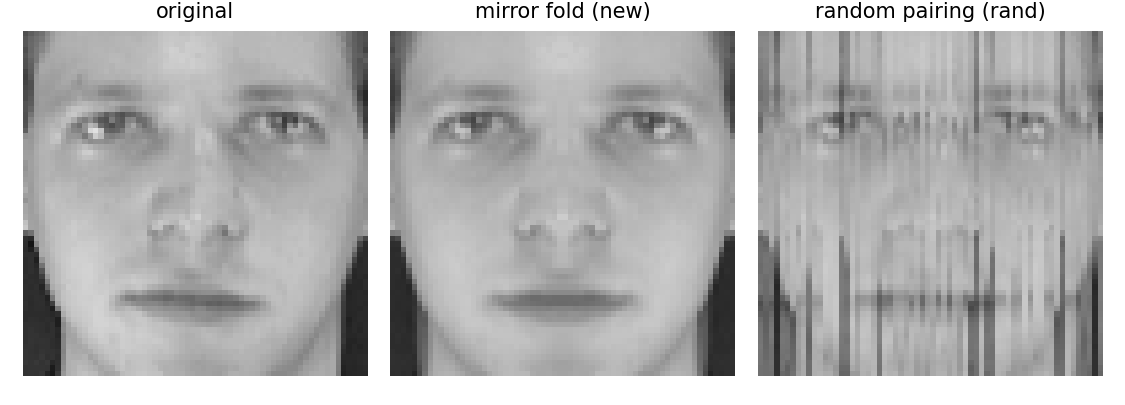}
\caption{One Olivetti face, the part kept by the mirror fold (\textsf{new}),
and the part kept by one random pairing (\textsf{rand}).  The random pairing
averages each column of the left half with an unrelated column of the right
half, so it halves the basis in the same way but discards the object's
symmetry.}
\label{fig:rand-example}
\end{figure}

\paragraph{Storage accounting}  We count the basis storage that the
recognition procedure keeps: the transform-domain vectors of all
faces, $m\sum_i\rho_i$ real numbers for \textsf{plain} and $h\sum_i\rho_i$
for \textsf{new} and \textsf{rand} (their own $\rho_i$'s).  The right
factor $\tens{V}$ and the singular tubes are not stored, since only the left
vectors are needed to form coefficients.  (Every construction also stores
the training mean slice, $mn$ numbers, which is identical across constructions
and omitted from the counts). The symmetric constructions store half as much
per vector but retain different numbers of them, so we compare at matched \emph{recognition rate}.

\subsection{Metrics}\label{sec:metrics}
The setup of Sections~\ref{sec:setup}--\ref{sec:protocol} is fixed throughout,
with $\Gamma$ denoting the $\gamma$ grid. For a construction
$a\in\{\textsf{plain},\textsf{new},\textsf{rand}\}$ and an energy fraction
$\gamma\in\Gamma$, let $\rho_i^{a}(\gamma)$ be the number of vectors kept in
face $i$ (Algorithm~\ref{alg:symsvd}) and $r_a(\gamma)$ the recognition rate,
both averaged over the five seeds.  Then:
\begin{enumerate}
  \item \emph{Storage} of construction $a$ at $\gamma$: the transform-domain entries
    kept, $S_a(\gamma)=m\sum_{i=1}^{n}\rho_i^{a}(\gamma)$ for \textsf{plain}
    and $S_a(\gamma)=h\sum_{i=1}^{n}\rho_i^{a}(\gamma)$ for \textsf{new} and
    \textsf{rand} (only the top half $\tens{U}_0$ is stored).
  \item \emph{Best recognition rate}: $r_a^\star=\max_{\gamma\in\Gamma}r_a(\gamma)$.
    The gaps in Table~\ref{tab:main} are
    $r_{\textsf{new}}^\star-r_{\textsf{plain}}^\star$ and
    $r_{\textsf{new}}^\star-r_{\textsf{rand}}^\star$.
  \item \emph{Rate bar}: $\beta=r_{\textsf{plain}}^\star-0.01$ (the offset
    keeps seed noise from deciding the matched point).
  \item \emph{Storage to reach the bar}:
    $S_a^{\beta}=\min\{S_a(\gamma):\gamma\in\Gamma,\ r_a(\gamma)\ge\beta\}$,
    undefined (``---'') if no $\gamma$ reaches it.
  \item \emph{Storage saving}: $\times\text{less}_a=S_{\textsf{plain}}^{\beta}/S_a^{\beta}$
    for $a\in\{\textsf{new},\textsf{rand}\}$.
  \item \emph{Margin}: the mean over $\Gamma$ of
    $r_{\textsf{new}}(\gamma)-r_{\textsf{rand}}(\gamma)$, used to color
    Figure~\ref{fig:main}.
\end{enumerate}
Both the bar and the matched points are read off the mean test curves
with no separate validation split for $\gamma$. The rule is the same
for every construction, but the absolute rates are optimistic (each is
a maximum over $\gamma$, averaged across five seeds).

\subsection{Storage at matched recognition rate}\label{sec:matched}
Table~\ref{tab:main} reports ten datasets, chosen to cover every outcome we
observed (Figure~\ref{fig:main} plots the storage saving). Five further sources
(BIOSCAN insects, beetles, cars, ALOI, the
original Yale faces) are omitted because every construction's rate was
low ($0.10$--$0.32$), at ceiling (ALOI, above $0.98$), or the set was
too small to separate the constructions (Yale, all within $0.01$).

Table~\ref{tab:main} separates two questions: does the symmetric basis 
reach a higher rate, and how much storage does it need to match \textsf{plain}.
The control is reported the same way.

Two effects combine in the saving.  Per retained vector the symmetric basis
stores half as many numbers, because it keeps only the top half of $\tens{U}$
(Theorem~\ref{thm:symsvd}), while the random-pairing control shares this factor.  On
top of that, when the mirror pairing concentrates the useful energy
in fewer vectors, \textsf{new} reaches the bar with fewer of them, and the
saving grows.  On LFW, for example,
\textsf{plain} first reaches the bar with $4595$ stored numbers
($\gamma=0.85$) and \textsf{new} with $678$ ($\gamma=0.8$), a $6.8\times$
saving. Overall we observe $5$--$11\times$ on the face datasets (LFW $6.8\times$, CelebA
$5.3\times$, Olivetti $7.3\times$, AFHQ $11.3\times$), $2$--$5\times$ on
leaves and butterfly wings, while the random pairing needs roughly
$1.5$--$4\times$ more storage than the mirror pairing on the same datasets,
or never reaches the bar (LeafSnap, where neither construction has a rate
gain, is the one exception).  On the recognition-rate axis the symmetric basis is clearly above
both \textsf{plain} and \textsf{rand} on Extended Yale~B and AFHQ
($+0.073$ to $+0.102$ over \textsf{plain},
$+0.035$ to $+0.069$ over \textsf{rand} on average over the grid), while on Olivetti,
LFW, CelebA, the leaves, and the butterfly wings
it matches \textsf{plain}'s rate at less storage without clearly separating
from the random-pairing control. On Extended Yale~B, where illumination varies
but the face itself is symmetric, \textsf{plain} never reaches the symmetric
basis's recognition rate at any storage: on the $32$ mildest illuminations
its best is $0.841$, and it needs $75{,}725$ stored numbers to come within
$0.01$ of it, while \textsf{new} passes $0.841$ outright with $710$ and
reaches $0.943$. For reference, the t-SVD recognition rates reported on this
database under very similar but not identical protocols are $0.79$ on average
(first $20$ illuminations, $15$ training images per subject, $\gamma=0.9$
\cite[Table~1]{HaoKBH2013}) and about $0.95$ (first $30$ illuminations,
$10$-fold cross-validation, fixed truncation \cite[Tables~3--4]{ZhangSKA2018}).
With all $64$ illuminations, every rate drops (\textsf{plain} $0.707$,
\textsf{new} $0.801$), but the ordering stays the same. 
The tabulated $106.6\times$ and $40.9\times$ are therefore comparisons against \textsf{plain}'s ceiling,
and the informative view is Figure~\ref{fig:curves}.  Observe that on this database the random-pairing
control itself exceeds \textsf{plain}'s best rate ($0.919$ against $0.841$).
One explanation is that any fold averages away part of the illumination variation,
so even a wrong pairing helps. The mirror pairing does so without blurring the face,
and gains even more. When the symmetric basis cannot reach the bar at any
$\gamma$ (MUCT), there is no matched point, shown as ``---''.  A large
$\times$less does not by itself mean that reflection helped: on a near-chance
set \textsf{plain} needs many components just to reach its own low bar, which
inflates the ratio, while the \textsf{new}$-$\textsf{rand} column is the check.

\begin{table}[!htbp]
\centering
\resizebox{\textwidth}{!}{
\begin{tabular}{lccccccccccc}
\toprule
 & \multicolumn{3}{c}{best recognition rate} & \multicolumn{2}{c}{gap} & \multicolumn{3}{c}{storage to reach the bar} & \multicolumn{2}{c}{$\times$less} \\
\cmidrule(lr){2-4}\cmidrule(lr){5-6}\cmidrule(lr){7-9}\cmidrule(lr){10-11}
dataset & \textsf{plain} & \textsf{new} & \textsf{rand} & \textsf{new}$-$\textsf{plain} & \textsf{new}$-$\textsf{rand}
  & \textsf{plain} & \textsf{new} & \textsf{rand} & \textsf{new} & \textsf{rand} \\
\midrule
\multicolumn{11}{l}{Win: \textsf{new} above both \textsf{plain} and \textsf{rand} (rule in caption)}\\
\quad Extended Yale~B & 0.841 & 0.943 & 0.919 & $+0.102$ & $+0.023$ & 75725 & 710 & 1843 & $106.6\times^\dagger$ & $41.1\times^\dagger$\\
\quad Extended Yale~B (all illuminations) & 0.707 & 0.801 & 0.756 & $+0.094$ & $+0.045$ & 57779 & 1414 & 4915 & $40.9\times^\dagger$ & $11.8\times^\dagger$\\
\quad AFHQ & 0.519 & 0.592 & 0.554 & $+0.073$ & $+0.038$ & 3264 & 288 & 838 & $11.3\times$ & $3.9\times$\\
\midrule
\multicolumn{11}{l}{Storage saving at comparable rate}\\
\quad Olivetti & 0.935 & 0.957 & 0.948 & $+0.022$ & $+0.008$ & 4275 & 582 & 1235 & $7.3\times$ & $3.5\times$\\
\quad LFW & 0.302 & 0.357 & 0.330 & $+0.055$ & $+0.027$ & 4595 & 678 & 1376 & $6.8\times$ & $3.3\times$\\
\quad CelebA & 0.338 & 0.371 & 0.338 & $+0.032$ & $+0.032$ & 5798 & 1101 & 1702 & $5.3\times$ & $3.4\times$\\
\quad Swedish leaves & 0.890 & 0.914 & 0.886 & $+0.024$ & $+0.028$ & 2867 & 877 & 3565 & $3.3\times$ & $0.8\times$\\
\quad Leeds butterflies & 0.859 & 0.859 & 0.839 & $+0.000$ & $+0.020$ & 8627 & 1658 & --- & $5.2\times$ & ---\\
\quad LeafSnap & 0.921 & 0.918 & 0.917 & $-0.003$ & $+0.001$ & 3136 & 1562 & 966 & $2.0\times$ & $3.2\times$\\
\midrule
\multicolumn{11}{l}{No effect (at ceiling)}\\
\quad COIL-100 & 0.984 & 0.991 & 0.984 & $+0.007$ & $+0.007$ & 973 & 448 & 480 & $2.2\times$ & $2.0\times$\\
\midrule
\multicolumn{11}{l}{Loss}\\
\quad MUCT & 0.819 & 0.787 & 0.776 & $-0.032$ & $+0.011$ & 7526 & --- & --- & --- & ---\\
\bottomrule
\end{tabular}}
\caption{Recognition and storage for the three constructions on ten datasets,
plus Extended Yale~B with all $64$ illuminations (protocol
  of Section~\ref{sec:protocol}: orthonormal DCT, energy truncation of
  Algorithm~\ref{alg:symsvd}, Hao et al.'s recognition procedure, $70/30$
  split, classes as in Section~\ref{sec:setup}, $5$-seed mean).  ``Best recognition rate'' is
  each construction's best over the $\gamma$ grid. The gaps are \textsf{new}'s best
  minus that of the others.  ``Storage to reach the bar'' is the smallest number of
  stored transform-domain basis entries ($m\sum_i\rho_i$ for \textsf{plain},
  $h\sum_i\rho_i$ for \textsf{new} and \textsf{rand}) at which the construction's rate
  reaches \textsf{plain}'s best minus $0.01$, $\times$less is \textsf{plain}'s
  storage divided by that construction's, and ``---'' means the bar is never reached.
  Groups: a \emph{win} means \textsf{new}'s best rate is at least $0.03$
  above \textsf{plain}'s and \textsf{new} is above \textsf{rand} by at
  least $0.02$ on average over the $\gamma$ grid (the margin of
  Section~\ref{sec:metrics}), \emph{storage saving} means
  \textsf{new} reaches \textsf{plain}'s rate at less storage without a
  clear rate gain. Because storage is read off the $\gamma$ grid, the
  $\times$less ratios are coarse, while the rates are not. 
  \\
  $^\dagger$On the Extended
  Yale~B rows \textsf{plain} never reaches the symmetric basis's rate, so the
  ratio compares against \textsf{plain}'s ceiling.}
\label{tab:main}
\end{table}

\begin{figure}[!htbp]
\centering
\includegraphics[width=0.9\textwidth]{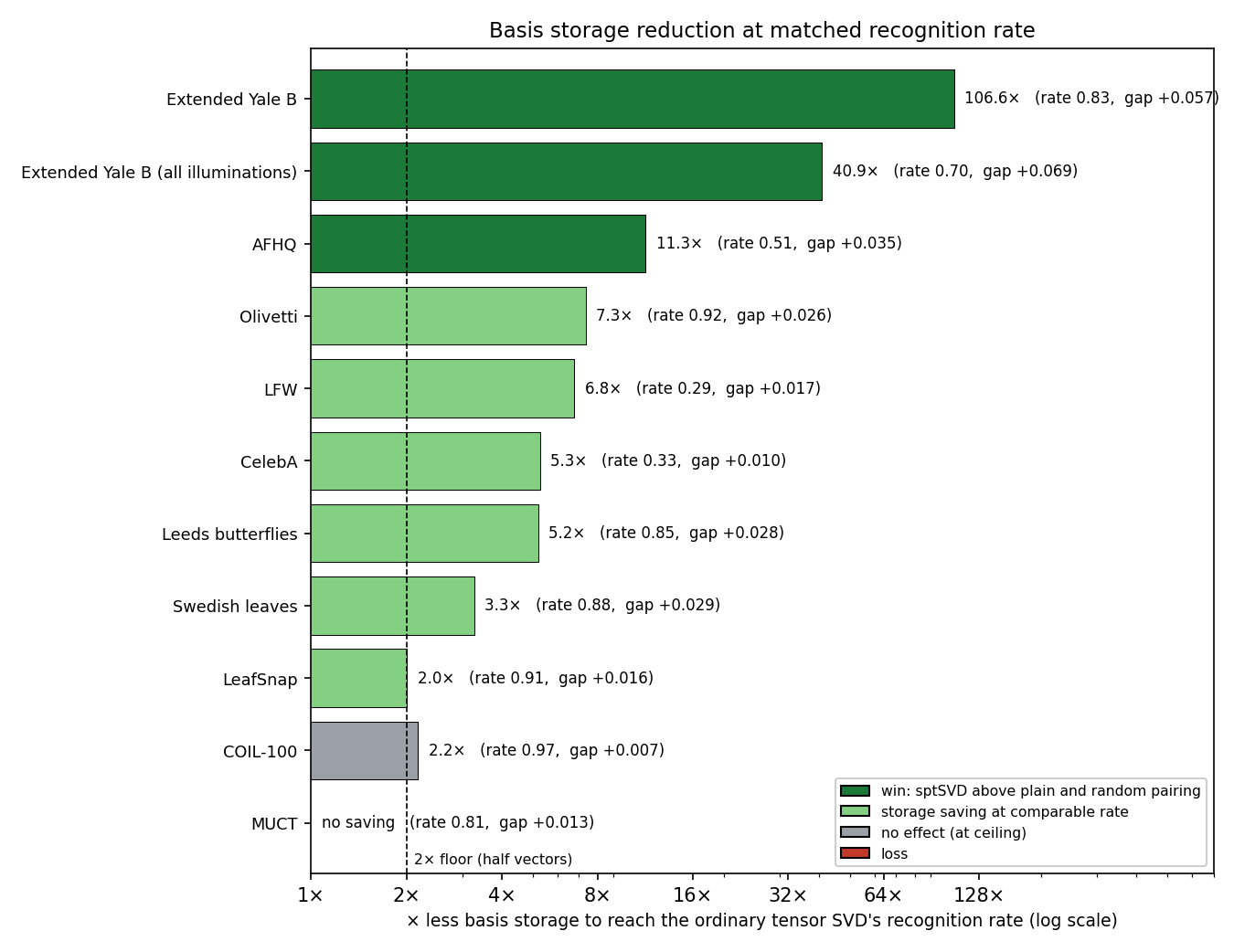}
\caption{Basis storage reduction ($\times$less) to match the ordinary tensor SVD's recognition rate, the datasets of Table~\ref{tab:main}, colored by group (dark green: win, light green: storage saving at comparable rate, grey: no effect, red: loss).  The horizontal axis is logarithmic, the dashed
line marks the automatic $2\times$ from storing half of each vector.  The
Extended Yale~B ratios compare against \textsf{plain}'s ceiling.}
\label{fig:main}
\end{figure}

\subsection{Recognition across the storage range}\label{sec:curves}
Figure~\ref{fig:curves} plots recognition rate against stored basis size
(all faces, transform domain) for the datasets of Table~\ref{tab:main}
and all three constructions, as $\gamma$ sweeps the grid.
The plotted rates range from below $0.1$ (at the smallest $\gamma$) to
$0.99$, so a common vertical axis would flatten most panels.
Please note that each panel has its own vertical scale, so gaps should
not be compared by eye across panels.
Equal $x$ means equal stored numbers, so the vertical gap between \textsf{new}
and \textsf{plain} is the gain at equal storage
and the horizontal gap at a given rate is the storage saving. 
On every face dataset the symmetric basis is above \textsf{plain} at every
storage level beyond the first few hundred numbers, and above the
random-pairing control on average (they cross at a few small-storage points,
mainly on LFW and CelebA). The gap is 
largest under varying illuminations (Extended Yale~B, $+0.3$ at equal storage)
and smallest where the rate is near its ceiling.
Leaves and butterfly wings show the same
ordering as the faces with smaller margins (and isolated crossings with \textsf{rand}).
At ceiling (COIL-100) the three curves converge: \textsf{new}
reaches the $0.98$ plateau with fewer stored numbers, but once every
construction is there no basis choice can matter.  MUCT is the one dataset
where \textsf{plain} overtakes the symmetric constructions at large storage.
There the antisymmetric part that the symmetric basis discards helps distinguish faces,
so the symmetric constructions level off below \textsf{plain}'s best rate
(Section~\ref{sec:when}).

\begin{figure}[!htbp]
\centering
\includegraphics[width=0.95\textwidth]{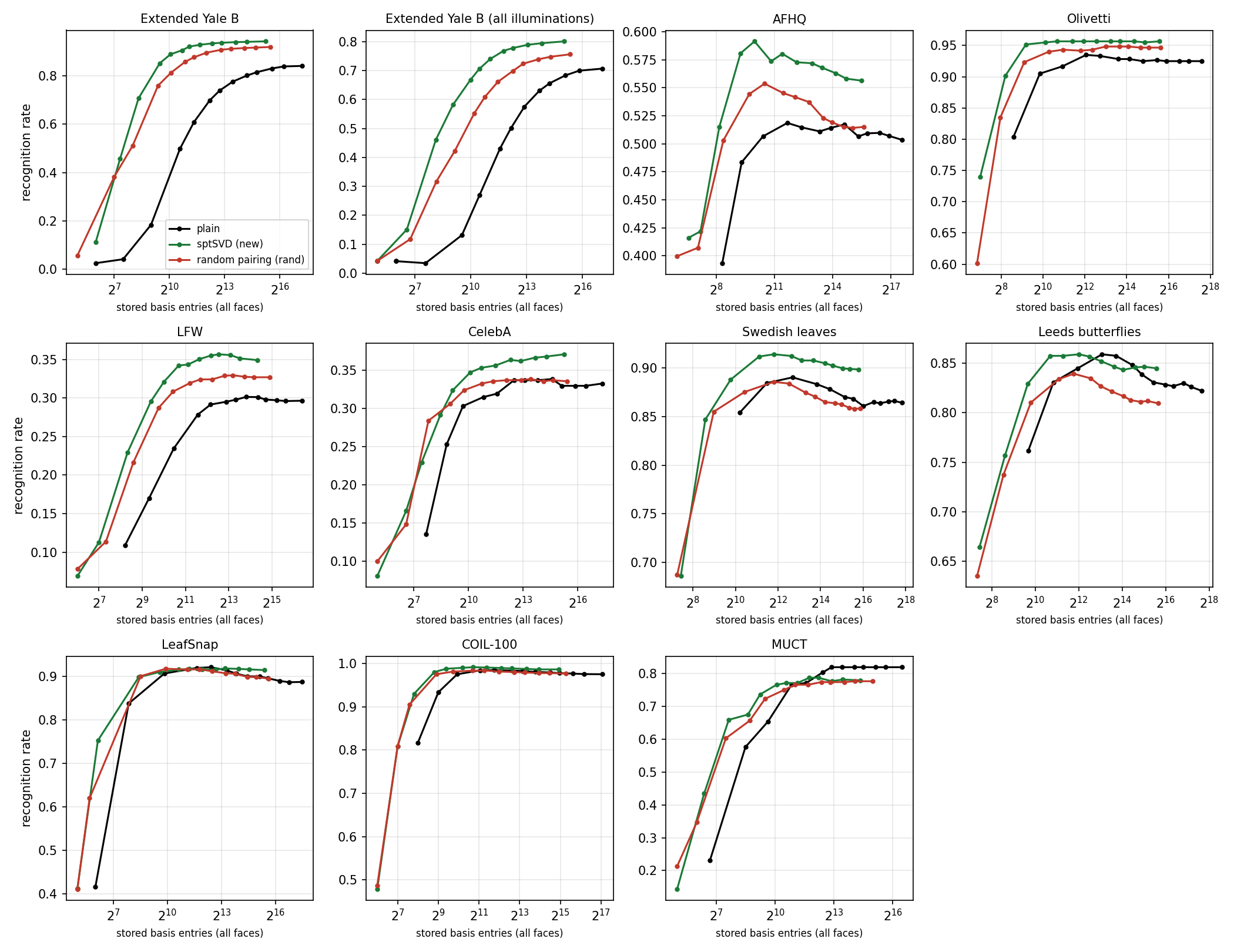}
\caption{Recognition rate against stored basis size (log scale) for 
    the datasets of Table~\ref{tab:main}, as $\gamma$ sweeps the grid.
    Same protocol as Table~\ref{tab:main}, $5$-seed mean. 
    Vertical scales differ between panels.}
\label{fig:curves}
\end{figure}

\subsection{Reconstruction}\label{sec:recon}
On perfectly symmetric data the symmetry-preserving truncation has the same
error as the ordinary $\starM$-SVD at every $k$ at half the storage. 
On real images the method reconstructs the symmetric part
of each image only, so its reconstruction error is bounded below by the energy
of the discarded antisymmetric part, and at equal storage the unconstrained
basis reconstructs the raw pixels more accurately. The benefit is thus in recognition,
not reconstruction. On aligned symmetric objects the discarded part is mostly
illumination and pose, rather than identity. Figure~\ref{fig:recon-gallery}
shows what the symmetric reconstruction preserves.

\begin{figure}[!htbp]
\centering
\includegraphics[width=0.55\textwidth]{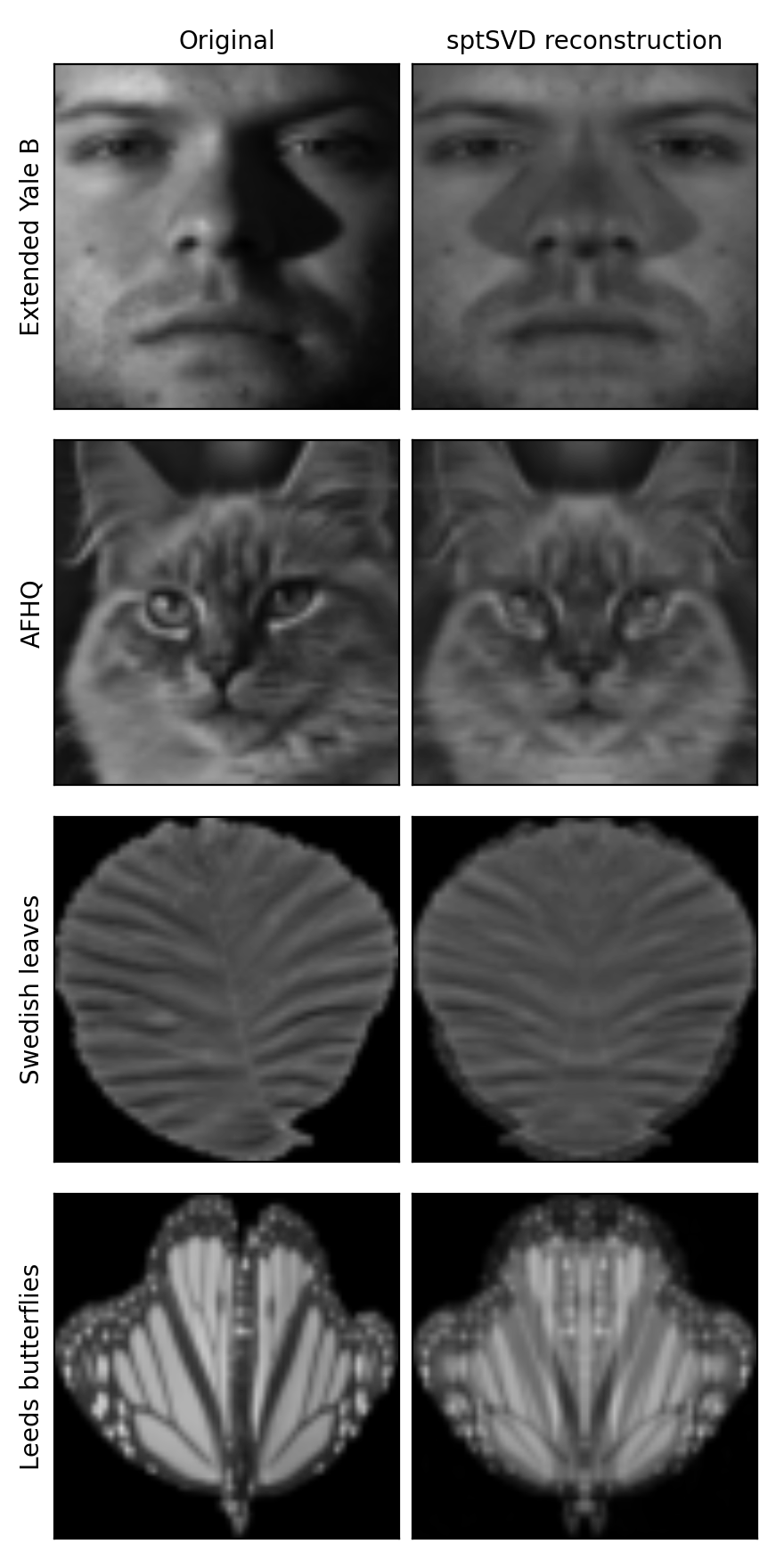}
\caption{sptSVD reconstructions at $k=h$ (the largest rank the symmetric method can use) for one test image from each of four datasets spanning distinct domains.}
\label{fig:recon-gallery}
\end{figure}

\subsection{When the benefit appears}\label{sec:when}
The reflection benefit requires aligned, bilaterally symmetric data whose
identity resides in the symmetric component.  Every dataset in the top group of
Table~\ref{tab:main} is of that kind: frontal faces, and pre-aligned leaves and
butterfly wings. 

MUCT is the only face dataset where the symmetric basis never
reaches the ordinary rate, and it does not beat the random
pairing there either, meaning the fold loses information the
reflection does not recover. MUCT was captured with five cameras
(one frontal, two three-quarter, two elevated). Re-running on each
camera group separately gives the same loss on frontal and elevated
views (\textsf{plain} $0.628$, \textsf{new} $0.575$, \textsf{rand} $0.617$)
and a tie on three-quarter views ($0.364$, $0.361$, $0.368$), and re-centering
each image on its symmetry axis does not change this. So the loss is not from
pose or crop, and clearly on these faces the antisymmetric part matters.

Where recognition is low (beetles, cars, BIOSCAN: best rates $0.10$--$0.23$ against chance of
$0.07$--$0.08$), aligning the objects does not create a
\textsf{new}$-$\textsf{rand} margin. There is little class signal in the
symmetric part of the image for the basis to keep, so the symmetric
constructions gain nothing beyond storing half of each vector.

\subsection{Relationship to prior work}
Relative to the matrix symmetry-preserving SVD of Shah and Sorensen
\cite{ShahSorensen2006}, what is new is the extension to third-order
tensors under an orthogonal transform and the pooled truncation. Relative to the
$\starM$-SVD \cite{KernfeldKilmerAeron2015,KHAN2021}, it is the
symmetry-preserving specialization and its half-storage. The symmetry gives the
half storage and the transform gives the multiway compression, and we combine them.

Against the non-symmetric $\starM$-SVD, the symmetric decomposition matches the \\
recognition rate at reduced basis storage.
Randomized t-SVD \cite{ZhangSKA2018}
is independent of symmetry and could be applied to the folded slices, though at
our image sizes the exact SVDs are not a bottleneck.

\section{Conclusions and Future Work}\label{sec:conclusion}
We defined $\starM$-symmetric tensors as those whose transform-domain
frontal slices are reflectively symmetric, and established the symmetry-preserving
$\starM$-SVD $\tens{A}=\tens{U}\starM\tens{S}\starM\tens{V}\trans$ with an exactly
symmetric, half-stored left basis. The construction extends the matrix
symmetry-preserving SVD of Shah and Sorensen to the $\starM$-algebra. In recognition experiments it matches the ordinary tensor SVD's rate at $2$--$11\times$ less basis storage, except on MUCT, and on faces under varying illumination exceeds its best rate at any storage.

Several directions remain.  The clearest one is to understand what happens when the
data is only approximately symmetric: our error identity assumes exact symmetry in
each fold, and it would be useful to know how it degrades without that assumption.
A second is the choice of transform $\Mmat$: a data-adapted transform,
learned as in \cite{NewmanKeegan2025}, might compress better while keeping the
orthogonality the method relies on.  Finally, the same
fold-and-halve idea should extend to data with more than one symmetry axis (for
instance an object symmetric both left--right and top--bottom), which would cut
storage further. However, imposing more symmetry throws away more of the data,
so past some point the extra storage saving costs recognition rate.

\medskip
\section*{Acknowledgements}
MK’s work on this project was
partially supported by NSF DMS-2410698.

\section*{Data sources}
The recognition datasets of Section~\ref{sec:experiments} are, with their sources:
{\small
\begin{itemize}\setlength{\itemsep}{1pt}
\item \textbf{Extended Yale~B} (cropped): A.~S.~Georghiades, P.~N.~Belhumeur,
  D.~J.~Kriegman, \emph{From few to many: illumination cone models for face
  recognition under variable lighting and pose}, IEEE TPAMI 23(6):643--660, 2001;
  K.-C.~Lee, J.~Ho, D.~J.~Kriegman, \emph{Acquiring linear subspaces for face
  recognition under variable lighting}, IEEE TPAMI 27(5):684--698, 2005.
  
  \url{https://vision.ucsd.edu/~leekc/ExtYaleDatabase/ExtYaleB.html}
\item \textbf{Yale} (original): P.~N.~Belhumeur, J.~P.~Hespanha, D.~J.~Kriegman,
  \emph{Eigenfaces vs.\ Fisherfaces}, IEEE TPAMI 19(7):711--720, 1997.
\item \textbf{LFW}: G.~B.~Huang, M.~Ramesh, T.~Berg, E.~Learned-Miller,
  \emph{Labeled Faces in the Wild}, Univ.\ Massachusetts Amherst TR~07-49, 2007.
  \url{http://vis-www.cs.umass.edu/lfw/}
\item \textbf{CelebA}: Z.~Liu, P.~Luo, X.~Wang, X.~Tang, \emph{Deep learning face
  attributes in the wild}, ICCV 2015. \url{https://mmlab.ie.cuhk.edu.hk/projects/CelebA.html}
\item \textbf{AFHQ}: Y.~Choi, Y.~Uh, J.~Yoo, J.-W.~Ha, \emph{StarGAN v2}, CVPR
  2020. \url{https://github.com/clovaai/stargan-v2}
\item \textbf{Olivetti/ORL}: F.~S.~Samaria, A.~C.~Harter, \emph{Parameterisation of
  a stochastic model for human face identification}, IEEE WACV 1994.
\item \textbf{MUCT}: S.~Milborrow, J.~Morkel, F.~Nicolls, \emph{The MUCT
  landmarked face database}, PRASA 2010. \url{http://www.milbo.org/muct/}
\item \textbf{Swedish leaves}: O.~J.~O.~S\"oderkvist, \emph{Computer vision
  classification of leaves from Swedish trees}, MSc thesis, Link\"oping Univ.,
  2001. \url{https://www.cvl.isy.liu.se/research/datasets/swedish-leaf/}
\item \textbf{LeafSnap}: N.~Kumar et al., \emph{Leafsnap: a computer vision system
  for automatic plant species identification}, ECCV 2012.
  \url{https://leafsnap.com/dataset/}
\item \textbf{Leeds Butterfly}: J.~Wang, K.~Markert, M.~Everingham, \emph{Learning
  models for object recognition from natural language descriptions}, BMVC 2009.
  
\item \textbf{COIL-100}: S.~A.~Nene, S.~K.~Nayar, H.~Murase, \emph{Columbia Object
  Image Library (COIL-100)}, Columbia Univ.\ TR CUCS-006-96, 1996.
  \url{https://www.cs.columbia.edu/CAVE/software/softlib/coil-100.php}
\item \textbf{ALOI}: J.-M.~Geusebroek, G.~J.~Burghouts, A.~W.~M.~Smeulders,
  \emph{The Amsterdam Library of Object Images}, IJCV 61(1):103--112, 2005.
  \url{https://aloi.science.uva.nl/}
\item \textbf{Stanford Cars}: J.~Krause, M.~Stark, J.~Deng, L.~Fei-Fei, \emph{3D
  object representations for fine-grained categorization}, ICCV Workshops (3dRR)
  2013.
\item \textbf{BIOSCAN}: Z.~Gharaee et al., \emph{BIOSCAN-5M: a multimodal dataset
  for insect biodiversity}, NeurIPS 2024 (30k subset).
  \url{https://huggingface.co/datasets/Voxel51/BIOSCAN-30k}
\item \textbf{Beetles} (NEON ground beetles, Carabidae): \emph{2018 NEON
  ethanol-preserved ground beetles}, Imageomics.
  \\\url{https://huggingface.co/datasets/imageomics/2018-NEON-beetles}
\end{itemize}}

\bibliographystyle{siamplain}
\bibliography{refs}

\end{document}